\theoremstyle{plain}
\newtheorem{thm}{Theorem}[section]
\newtheorem{cor}[thm]{Corollary}
\newtheorem{lem}[thm]{Lemma}
\newtheorem{defn}[thm]{Definition}
\newtheorem{exa}[thm]{Example}
\newtheorem{rem}[thm]{Remark}
\begin{document}

\title [Intersection Graph of graded ideals]{Intersection Graph of graded ideals}

\author[{{T. Alraqad}}]{\textit{Tariq Alraqad}}

\address
{\textit{Tariq Alraqad, Department of Mathematics, University of Ha'il, Saudi Arabia.}}
\bigskip
{\email{\textit{t.alraqad@uoh.edu.sa}}}

 \author[{{H. Saber}}]{\textit{Hicham Saber}}

\address
{\textit{Hicham Saber, Department of Mathematics, University of Ha'il, Saudi Arabia.}}
\bigskip
{\email{\textit{hicham.saber7@gmail.com}}}

 \author[{{R. Abu-Dawwas }}]{\textit{Rashid Abu-Dawwas }}

\address
{\textit{Rashid Abu-Dawwas, Department of Mathematics, Yarmouk
University, Jordan.}}
\bigskip
{\email{\textit{rrashid@yu.edu.jo}}}

\subjclass[2010]{16W502, 13A02, 05C25}

\date{}

\begin{abstract}
In this article we introduce and study the intersection graph of graded ideals of graded rings. The intersection graph of $G-$graded ideals of a graded ring $(R,G)$ is a simple graph, denoted by $Gr_G(R)$, whose vertices are the nontrivial graded ideals and two ideals are adjacent if they are not trivially intersected. We study graphical properties for these graphs such as connectivity, regularity, completeness, domination numbers, and girth. These intersection graphs for faithful, strong, and first strong gradings are also discussed. In addition, we investigate intersection graphs of $\mathbb{Z}_2-$graded idealization, and we deal with intersection graph of graded ideals when the grading group is an ordered groups.
\end{abstract}

\keywords{Graded rings, Intersection Graph, homogeneous, .
 }
 \maketitle

\section{Introduction}\label{sec1}
Throughout this article, all rings are associative with unity $1\neq 0$. Let $G$ be a group. A ring $R$, is said to be  $G$-graded if there exist additive subgroups $\{R_{\sigma}\mid \sigma\in G\}$ such that $\displaystyle R=\oplus_{\sigma\in G}R_{\sigma}$ and $R_{\sigma}R_{\tau}\subseteq R_{\sigma\tau}$ for all $\sigma,\tau\in G$. When $R$ is $G$-graded we denote that by $(R,G)$. The support of $(R,G)$ is defined as $supp(R,G)=\{\sigma\in G:R_{\sigma}\neq0\}$. If $x\in R$, then $x$ can be written uniquely as $\sum_{\sigma\in G}x_{\sigma}$, where $x_{\sigma}$ is the component of $x$ in $R_{\sigma}$. It is well known that $R_{e}$ is a subring of $R$ with $1\in R_{e}$.  A left ideal $I$ of $R$ is called $G-$graded left ideal provided that $I=\oplus_{\sigma\in G}(I\cap R_{\sigma})$. Denote by $hI^{\ast}(R)$ the set of all proper nontrivial $G-$graded left ideals of $R$. A $G-$graded left ideal is called $G-$graded maximal (resp. minimal) if it is maximal (resp. minimal) among the $G-$graded left ideals of $R$.  A left (resp. $G-$graded left) ideal of $R$ is called left (resp. $G-$graded left) essential if $I\cap J\neq \{0\}$ for all $J\in I^{\ast}(R)$ (resp. $J\in hI^{\ast}(R)$).  We call $R$, $G-$graded left Noetherian (resp. Artinian) if $R$ satisfies the ascending (resp. descending) chain condition for the $G-$graded left ideals. Analogously, $R$ is called $G-$graded local if it has a unique $G-$graded maximal left ideal. The ring $R$ is called $G-$graded domain if it is commutative and has no homogeneous nonzero zero-divisors. Similarly, $R$ is called $G-$graded division ring if every nonzero homogeneous element is a unit. A $G-$graded field is a commutative $G-$graded division ring.

In the last two decades, the theory of graded rings and modules has been receiving an increasing interest. Many authors introduced and studied, in a parallel way, the graded version of a wide range of concepts see \cite{abd16, ba17, fa11, co83, gr90, kh13, le13, ns82, re94, va81}. Another area of interest in recent years is studying graphs on algebraic structures. These studies usually aim to investigate  ring properties using graph theory concepts. Since Beck \cite{be88} introduced the concept of zero divisor graph in 1988, this approach became very popular. Total graphs, annihilating-ideal graph, and unit graphs are also examples of graphs associated to rings see \cite{as10,an08,gr90,be11}. For studies of graphs associated with graded rings in particular are somewhat rare. We only found two such studies in the literature, namely \cite{kh16, ro17}.

 In 2009, Chakrabarty et al. \cite{ch09} introduced the intersection graph of ideals of a ring. Denote by $I^{\ast}(R)$ the family of all nontrivial left ideals of a ring $R$. The intersection graph of ideals of $R$, denoted by $G(R)$ is the simple graph whose set of vertices is $I^{\ast}(R)$ where two nontrivial ideals $I$ and $J$ are adjacent if $I\cap J\neq\{0\}$. Chakrabarty et al. \cite{ch09} studied the connectivity of $G(R)$ and investigated several properties of $G(\mathbb{Z}_n)$. Akbari et al. \cite{ak13} studied these graphs more deeply. Among many results, they characterize all rings $R$ for which $G(R)$ is not connected. For other interesting studies of intersection graphs of ideals of rings the reader is referred to \cite{ab14, ab16, ak14, ja10, ja11, pu14, ra14, ra20, sa17, xu20}.

 The main theme of this work is the study of a graded version of the intersection graph of left ideals. We introduce the intersection graph of the $G-$graded left ideals of a $G-$graded ring $R$ denoted by $Gr_G(R)$.
\begin{defn}\label{d0}
Let $R$ be a $G-$graded ring. The intersection graph of the $G-$graded left ideals of $R$, denoted by $Gr_G(R)$, is the simple graph whose set of vertices is $hI^{\ast}(R)$ and two vertices $I$ and $J$ are adjacent if $I\cap J\neq\{0\}$.
\end{defn}
Sections \ref{sec2} and \ref{sec3} focus, in a way parallel to the ungraded case, on the graphical properties of $Gr_G(R)$. For these graphs, we discuss connectivity, diameter, regularity, completeness, domination numbers, and girth.
In section \ref{sec4} we study the relationship between $Gr_G(R)$ and $G(R_e)$ for some types of gradings namely left faithful, strong, and first strong gradings. In case of left $e-$faithful, we obtain an equivalence relation $\sim$  on vertices $Gr_G(R)$ by $I\sim J$ if and only if $I\cap R_e=J\cap R_e$. Then we show that the  quotient graph of $Gr_G(R)$ over the equivalence classes of $\sim$ is isomorphic to $G(R_e)$. This isomorphism allows us to extent many of the graphical properties of $G(R_e)$ to $Gr_G(R)$. We also prove that if $(R,G)$ is first strong grading then $Gr_G(R)\cong G(R_e)$. In Section \ref{sec5}, we investigate the intersection graph of graded ideals of $\mathbb{Z}_2-$graded idealizations. Section \ref{sec6} focuses on the relationship between $Gr_G(R)$ and $G(R)$ when the grading group $G$ is an ordered group.

For standard terminology and notion in graph theory, we refer the reader to the text-book \cite{bo76}. Let $\Gamma$ be a simple graph with vertex set $V(\Gamma)$ and set of edges $E(\Gamma)$. Then $|V(\Gamma)|$ is the order of $\Gamma$. If $x,y\in V(\Gamma)$ are adjacent we write that as $x-y$. The neighborhood of a vertex $x$ is $N(x)=\{y\in V(\Gamma)\mid y-x\}$ and the degree of $x$ is $deg(x)=|N(x)|$. The graph $\Gamma$ is said to be regular if all of its vertices have the same degree. A graph is called complete (resp. null) if any pair of its vertices are adjacent (res. not adjacent). A complete (resp. null) graph with $n$ vertices is denoted by $K_n$ (resp. $N_n$). A graph is called start graph if it has no cycles and has one vertex (the center) that is adjacent to all other vertices. A graph is said to be connected if any pair of its vertices is connected by a path. For any pair of vertices $x, y$ in $\Gamma$, the distance $d(x,y)$ is the length of the shortest path between them and $diam(\Gamma)$ is the supremum of $\{d(x,y)\mid x,y\in V(\Gamma)\}$. The girth of a $\Gamma$, denoted by $g(\Gamma)$ is the length of its shortest cycle. If $\Gamma$ has no cycles then $g(\Gamma)=\infty$. A graph $\Upsilon$ is a subgraph of $\Gamma$ if $V(\Upsilon)\subseteq V(\Gamma)$ and $E(\Upsilon)\subseteq E(\Gamma)$. $\Upsilon$ is called induced subgraph if any edge in $\Gamma$ that joins two vertices in $\Upsilon$ is in $\Upsilon$. A complete subgraph of $\Gamma$ is called a clique, and the order of the largest clique in $\Gamma$, denoted by $\omega(\Gamma)$, is the clique number of $\Gamma$. A dominating set in $\Gamma$ is a subset $D$ of $V(\Gamma)$ such that every vertex of $\Gamma$ is in $D$ or adjacent to a vertex in $D$. The domination number of $\Gamma$, denoted by $\gamma(\Gamma)$, is the minimum cardinality of a dominating set in $\Gamma$.

\section{Connectivity, Regularity, and diameter of $Gr_G(R)$}\label{sec2}

We start this section by stating the the following well known lemma regarding graded ideals, which will be used frequently throughout the paper.

\begin{lem}\label{B}(\cite{fa11}, Lemma 2.1) Let $R$ be a $G$-graded ring. If $I$ and $J$ are $G-$graded left ideals of $R$, then so are $I+J$ and $I\bigcap J$.
\end{lem}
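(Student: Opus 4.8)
The plan is to reduce everything to the standard homogeneous-component criterion for gradedness and then verify closure component-by-component. First I would record the criterion that a left ideal $K$ of $R$ is $G$-graded if and only if, for every $x\in K$, each homogeneous component $x_\sigma$ of $x$ again lies in $K$. This follows directly from the defining condition $K=\oplus_{\sigma\in G}(K\cap R_\sigma)$: writing $x=\sum_{\sigma}x_\sigma$ in $R$, the decomposition forced by $K=\oplus_{\sigma}(K\cap R_\sigma)$ must coincide with the homogeneous decomposition of $x$ in $R$ by uniqueness, so $x_\sigma\in K\cap R_\sigma\subseteq K$; conversely, if every component of every element of $K$ lies in $K$, then each $x=\sum_{\sigma}x_\sigma$ exhibits $x$ as a (necessarily direct) sum of elements of the $K\cap R_\sigma$, giving $K=\oplus_{\sigma}(K\cap R_\sigma)$.

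Next, that $I+J$ and $I\cap J$ are left ideals is immediate from elementary ring theory, so the only content is the grading. For $I\cap J$ I would take $x\in I\cap J$ and apply the criterion twice: since $x\in I$ and $I$ is graded, every $x_\sigma\in I$; since $x\in J$ and $J$ is graded, every $x_\sigma\in J$; hence $x_\sigma\in I\cap J$ for all $\sigma$, and the criterion yields that $I\cap J$ is graded.

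For $I+J$ I would take an arbitrary $x\in I+J$, write $x=a+b$ with $a\in I$ and $b\in J$, and observe that the degree-$\sigma$ component of $x$ is $x_\sigma=a_\sigma+b_\sigma$ by uniqueness of the homogeneous decomposition in $R$. Since $a_\sigma\in I$ and $b_\sigma\in J$ by the criterion applied to $I$ and to $J$ respectively, we get $x_\sigma\in I+J$ for every $\sigma$; the criterion then shows $I+J$ is graded.

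There is no real obstacle here: the statement is essentially a bookkeeping exercise once the homogeneous-component criterion is in hand. The only point requiring (minor) care is the appeal to uniqueness of homogeneous decomposition in $R=\oplus_{\sigma}R_\sigma$, which guarantees that the component maps $x\mapsto x_\sigma$ are additive, so that components of sums are sums of components; this additivity is precisely what makes both the $I\cap J$ and the $I+J$ arguments go through.
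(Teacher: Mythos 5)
Your proof is correct: the homogeneous-component criterion is stated and applied properly, and the additivity of the component maps is exactly the point needed for the $I+J$ case. The paper itself gives no proof of this lemma (it is quoted from \cite{fa11}, Lemma 2.1), and your argument is the standard one that the cited source uses, so there is nothing further to compare.
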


The following lemma is straightforward so we omit the proof.
\begin{lem}\label{r1} Let $R$ be a $G-$graded ring and let $I$ be $G-$graded left ideal of $R$. \\
(1) $I$ is $G-$graded minimal if and only if $N(I)=\{A\in hI^{\ast}(R)\mid I\subset A\}$.\\
(2) $I$ in isolated vertex in $Gr_G(R)$ if and only if it is $G-$graded minimal as well as $G-$graded maximal.\\
(3) $I$ is $G-$graded essential if and only if $N(I)=hI^{\ast}(R)\setminus\{I\}$.
\end{lem}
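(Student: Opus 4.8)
The plan is to prove each of the three parts by directly unwinding the definitions, using throughout the single structural fact supplied by Lemma \ref{B}: the intersection of two $G$-graded left ideals is again a $G$-graded left ideal. This is what guarantees that $I\cap J$, wherever it appears, is itself a legitimate graded ideal, and may therefore be compared with $I$ under containment and minimality. The elementary observation I would isolate first is that for $I,J\in hI^{\ast}(R)$ one always has $I\cap J\subseteq I$, so that adjacency (the condition $I\cap J\neq\{0\}$) splits into the two cases $I\cap J=I$, equivalently $I\subseteq J$, and $\{0\}\subsetneq I\cap J\subsetneq I$; the latter can occur only when $I$ fails to be graded minimal.

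For part (1) I would argue both inclusions. If $I$ is graded minimal and $A\in N(I)$, then $I\cap A$ is a nonzero graded left ideal contained in $I$, so minimality forces $I\cap A=I$, i.e.\ $I\subseteq A$; since $A\neq I$ this gives $I\subset A$. Conversely, any $A\in hI^{\ast}(R)$ with $I\subset A$ satisfies $I\cap A=I\neq\{0\}$ and $A\neq I$, hence $A\in N(I)$. For the reverse implication I would prove the contrapositive: if $I$ is not graded minimal there is a nonzero graded left ideal $B\subsetneq I$, which lies in $hI^{\ast}(R)$ and satisfies $B\cap I=B\neq\{0\}$, so $B\in N(I)$; yet $I\not\subset B$, so $N(I)\neq\{A\in hI^{\ast}(R)\mid I\subset A\}$.

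Parts (2) and (3) I would then deduce quickly. For (2), an isolated vertex is exactly one with $N(I)=\emptyset$; graded minimality follows because any nonzero graded ideal properly inside $I$ would be a neighbor, and graded maximality follows because any proper graded ideal properly containing $I$ would be a neighbor, so the conjunction of the two conditions is forced; conversely, part (1) together with graded maximality makes the set $\{A\in hI^{\ast}(R)\mid I\subset A\}$ empty, hence $N(I)=\emptyset$. For (3), graded essentiality says $I\cap J\neq\{0\}$ for every $J\in hI^{\ast}(R)$; discarding the trivial case $J=I$, where $I\cap I=I\neq\{0\}$ automatically, this is precisely the statement that every other vertex is adjacent to $I$, that is, $N(I)=hI^{\ast}(R)\setminus\{I\}$.

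None of the steps presents a genuine obstacle, which is why the authors declare the lemma straightforward; the only points requiring care are bookkeeping ones, namely checking that the ideals produced (such as $B\subsetneq I$ or $A\supsetneq I$) are genuinely nonzero and proper, hence really belong to $hI^{\ast}(R)$, and keeping the strict-versus-nonstrict containments straight when passing between ``$I\subseteq A$ together with $A\neq I$'' and ``$I\subset A$''.
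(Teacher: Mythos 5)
Your argument is correct and is exactly the direct definition-unwinding the authors have in mind: the paper omits the proof precisely because it is this routine verification, with Lemma \ref{B} guaranteeing that $I\cap A$ is a graded ideal so that minimality/essentiality can be applied to it. All three parts, including the careful handling of properness and of strict versus non-strict containment, check out.
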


The following is a well known results about $\mathbb{Z}-$graded fields (see \cite{va81}).

\begin{thm}\label{t001}Let $R$ be a commutative $\mathbb{Z}-$graded ring. Then $R$ is a $\mathbb{Z}-$graded field if and only if $R_0$ is a field and either $R=R_0$ with trivial grading or $R\cong R_0[x,x^{-1}]$ with $\mathbb{Z}-$grading $R_k=R_0x^k$.
\end{thm}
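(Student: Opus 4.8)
The plan is to prove the two implications separately, with the forward direction carrying the real content. For the easy direction ($\Leftarrow$), I would simply check the definition of a $\mathbb{Z}$-graded field in each of the two listed cases. If $R=R_0$ is a field with trivial grading, the only nonzero homogeneous elements lie in $R_0\setminus\{0\}$ and are units by hypothesis. If $R\cong R_0[x,x^{-1}]$ with $R_k=R_0x^k$, then every nonzero homogeneous element has the form $ax^k$ with $a\in R_0\setminus\{0\}$, and $a^{-1}x^{-k}$ is its (homogeneous) inverse, while commutativity is inherited from $R_0[x,x^{-1}]$. So $R$ is a $\mathbb{Z}$-graded field in both cases.

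For the forward direction, the first key observation I would establish is that in a graded field the inverse of a nonzero homogeneous element $x\in R_k$ is again homogeneous, of degree $-k$. To see this, note first that a graded field is a graded domain: a product of two nonzero homogeneous elements is a product of units, hence a unit, hence nonzero. Writing $x^{-1}=\sum_j y_j$ in homogeneous components and comparing degrees in $xx^{-1}=1$, every term $xy_j$ with $j\neq -k$ must vanish; since $x\neq 0$ and $R$ is a graded domain, this forces $y_j=0$, leaving $x^{-1}=y_{-k}\in R_{-k}$. An immediate consequence is that $R_0$ is a field: for nonzero $a\in R_0$ the inverse $a^{-1}$ lies in $R_0$.

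Next I would analyze the support $S=\mathrm{supp}(R,\mathbb{Z})$. Using the graded-domain property and the fact that inverses negate degrees, $S$ is closed under addition and negation and contains $0$, hence is a subgroup of $\mathbb{Z}$. Therefore either $S=\{0\}$, in which case $R=R_0$ is a field with trivial grading, or $S=n\mathbb{Z}$ for some $n\geq 1$. In the latter case I would pick a nonzero (hence invertible) $x\in R_n$ and argue that multiplication by $x^k$ identifies $R_0$ with $R_{kn}$: for homogeneous $y\in R_{kn}$ one has $yx^{-k}\in R_0$, so $R_{kn}=R_0x^k$, while $R_m=0$ for $m\notin n\mathbb{Z}$. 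The powers $x^k$ lie in distinct graded components and are units, so the sum $\bigoplus_k R_0x^k$ is direct and free of rank one over $R_0$ in each occupied degree; the assignment $x\mapsto x$ then yields a graded isomorphism $R_0[x,x^{-1}]\cong R$, which is the stated form after identifying the support $n\mathbb{Z}$ with $\mathbb{Z}$.

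I expect the main obstacle to be the bookkeeping in the nontrivial-support case: establishing cleanly that each homogeneous component equals exactly $R_0x^k$ and that these assemble into a Laurent polynomial ring, together with the minor but necessary remark reconciling a support of the form $n\mathbb{Z}$ with the grading $R_k=R_0x^k$ written in the theorem (achieved by re-indexing along the group isomorphism $\mathbb{Z}\cong n\mathbb{Z}$). The preliminary homogeneity-of-inverses step is the conceptual crux that makes all subsequent arguments routine.
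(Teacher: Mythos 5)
The paper does not prove this statement at all: it records it as a well-known fact about $\mathbb{Z}$-graded fields and cites \cite{va81}, so there is no in-paper argument to compare yours against. Your proof is correct and is the standard one: homogeneity of inverses via the graded-domain property, deducing that $R_0$ is a field and that $\mathrm{supp}(R,\mathbb{Z})$ is a subgroup $n\mathbb{Z}$, and then identifying each component $R_{kn}$ with $R_0x^k$ to get a Laurent ring. The one point worth keeping explicit in a final write-up is the re-indexing caveat you already flag: when the support is $n\mathbb{Z}$ with $n>1$, the conclusion ``$R_k=R_0x^k$'' of the theorem only holds after regrading along $\mathbb{Z}\cong n\mathbb{Z}$, so the statement as printed is recovered up to this renormalization of degrees; stating the isomorphism as one of graded rings with $\deg x=n$ would make this precise.
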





Theorem \ref{t1} gives a necessary and sufficient condition for the intersection graph of graded ideals to be disconnected. We will see that this result is analogue to the nongraded case . First we state the theorem in nongraded case.

\begin{thm}\label{t00}\cite[Corollary 2.5]{ch09}
Let $R$ be a graded ring. Then $G(R)$ is disconnected if and only if it is null graph with at least two vertices.
\end{thm}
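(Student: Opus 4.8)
The plan is to prove the two directions separately, treating the forward (harder) direction through its contrapositive: \emph{if $G(R)$ has at least one edge, then $G(R)$ is connected}. The reverse implication is immediate, since a null graph with two or more vertices contains no path between distinct vertices and so is disconnected; note also that any disconnected graph automatically has at least two vertices, so the cardinality clause in the statement is free and only the equivalence ``disconnected $\Leftrightarrow$ edgeless'' needs work.

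For the contrapositive, fix two vertices $A,B\in I^{\ast}(R)$ and build a path between them, using only the global hypothesis that \emph{some} edge exists. The basic tool is the sum $A+B$, which is again a left ideal. If $A\cap B\neq\{0\}$ then $A$ and $B$ are already adjacent; if $A\cap B=\{0\}$ but $A+B\neq R$, then $A+B\in I^{\ast}(R)$ is a common neighbour, giving $d(A,B)\le 2$. Thus the only genuine obstruction to a short path is the \emph{complementary} case $R=A\oplus B$.

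The heart of the argument is the claim that \emph{if $R=A\oplus B$ with $A$ and $B$ both minimal left ideals, then $G(R)$ is null.} I would establish this using the $R$-linear projections $\pi_A,\pi_B\colon R\to A,B$ afforded by the decomposition. For a vertex $C\neq A,B$, minimality of $A$ forces $C\cap A\in\{\{0\},A\}$; the option $A\subseteq C$ leads (after noting $C\cap B=\{0\}$, else $C\supseteq A+B=R$) to $R=C\oplus B$, whence $C\cong R/B\cong A$ is simple, contradicting $\{0\}\neq A\subsetneq C$. Hence $C\cap A=\{0\}$, and symmetrically $C\cap B=\{0\}$; then $\pi_B|_C$ is injective with image $B$, so $C\cong B$ is itself minimal. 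Consequently every nontrivial left ideal is minimal, distinct minimal left ideals meet in $\{0\}$, and $G(R)$ has no edges. This claim is the step I expect to be the main obstacle, since it is where the ring-theoretic (module-decomposition) content enters rather than pure graph manipulation.

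Granting the claim, the contrapositive follows. Assume $G(R)$ has an edge and take $A,B$ in the complementary case $R=A\oplus B$. By the claim $A$ and $B$ cannot both be minimal, so say $A$ is not minimal and choose $\{0\}\neq A'\subsetneq A$. Then $A'+B\neq R$, for otherwise any $a\in A\setminus A'$ would satisfy $a=a'+b$ with $b=a-a'\in A\cap B=\{0\}$, forcing $a=a'\in A'$; hence $A'+B\in I^{\ast}(R)$ and $A-A'-(A'+B)-B$ is a path, so $d(A,B)\le 3$. Combining the three cases, any two vertices are joined by a path of length at most $3$, so $G(R)$ is connected, which proves the contrapositive and completes the theorem.
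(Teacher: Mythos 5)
Your proposal is correct, but it follows a genuinely different route from the one the paper uses (the paper itself only cites \cite[Corollary 2.5]{ch09} for this statement, but its own proof of the graded analogue, Theorem \ref{t1}, is the relevant comparison and works verbatim in the ungraded setting). The paper argues by contradiction: assuming the graph is disconnected yet contains an edge $I-J$, it observes that $I\cap J$ is itself a vertex adjacent to both, picks a vertex $K$ in a different component, and shows that $(I\cap J)+K\neq R$ yields a forbidden path while $(I\cap J)+K=R$ together with $I\cap K=J\cap K=\{0\}$ forces $I=I\cap J=J$, a contradiction. This is short and purely ideal-theoretic. You instead prove the contrapositive by exhibiting explicit paths between arbitrary vertices $A,B$, reducing to the complementary case $R=A\oplus B$, and there you invoke a module-theoretic claim (if both summands are minimal then every nontrivial left ideal is simple and the graph is null, proved via the projections $\pi_A,\pi_B$ and the isomorphism $C\cong R/B\cong A$) plus a length-$3$ path $A-A'-(A'+B)-B$ through a proper subideal of a non-minimal summand. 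All steps check out, including the verification that $A'+B\neq R$. What the paper's argument buys is brevity and the avoidance of any structure theory; what yours buys is a constructive description of the paths and an explicit diameter bound of $3$ in the non-null case (which the paper later sharpens to $2$ in Theorem \ref{t2} by a different reduction, showing that the minimal-summand analysis can be bypassed).
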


\begin{thm}\label{t01}\cite[Corollary 2.8]{ch09}
Let $R$ be a commutative ring. Then $G(R)$ is disconnected if and only if $R$ is a direct product of two fields.
\end{thm}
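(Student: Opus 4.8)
The plan is to reduce the statement to Theorem~\ref{t00}, which asserts that $G(R)$ is disconnected exactly when it is a null graph with at least two vertices. Hence it suffices to prove that $G(R)$ is edgeless with at least two vertices if and only if $R\cong F_1\times F_2$ for some fields $F_1,F_2$. The reverse implication is the easy direction: if $R=F_1\times F_2$, then every ideal of $R$ has the form $I_1\times I_2$ with $I_j$ an ideal of $F_j$, and since a field has only the trivial ideals, the only proper nonzero ideals of $R$ are $F_1\times\{0\}$ and $\{0\}\times F_2$. These two ideals meet in $\{0\}$, so $G(R)$ consists of exactly two nonadjacent vertices; it is null with two vertices and hence disconnected by Theorem~\ref{t00}.

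For the forward implication, assume $G(R)$ is null with at least two vertices and pick two distinct (hence nonadjacent) vertices $I,J$, so that $I\cap J=\{0\}$. The first key step is to show $I+J=R$. Indeed, $I+J$ is an ideal containing $I$ properly, since $J\neq\{0\}$ together with $I\cap J=\{0\}$ forces $J\not\subseteq I$; were $I+J$ a proper ideal, it would be a vertex distinct from $I$ and adjacent to it, because $I\cap(I+J)=I\neq\{0\}$, contradicting nullness. Therefore $I+J=R$, and combined with $I\cap J=\{0\}$ this yields an internal direct sum $R=I\oplus J$.

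The second key step is to convert this direct sum into a ring decomposition and to identify the factors. Writing $1=e+f$ with $e\in I$ and $f\in J$ and using $I\cap J=\{0\}$, one checks in the standard way that $e,f$ are orthogonal idempotents acting as the identities of $I$ and $J$, whence $R\cong I\times J$ as rings. It then remains to show that each factor is a field. If, say, $I$ were not a field, it would contain a proper nonzero ideal $K$; then (under the isomorphism) $K\times\{0\}$ and $I\times\{0\}$ would be distinct proper nonzero ideals of $R$ with $K\times\{0\}\cap I\times\{0\}=K\times\{0\}\neq\{0\}$, again contradicting nullness. Hence both $I$ and $J$ are fields and $R$ is a direct product of two fields.

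I expect the main obstacle to lie in the bookkeeping of the forward direction, specifically in verifying at each stage that the ideals one produces (namely $I+J$, and later $K\times\{0\}$) are genuinely \emph{proper} and \emph{nonzero}, so that they qualify as vertices and the nullness hypothesis can legitimately be invoked to force a contradiction. By contrast, the passage from the internal direct sum $R=I\oplus J$ to the ring isomorphism $R\cong I\times J$ is a routine idempotent decomposition and should present no real difficulty.
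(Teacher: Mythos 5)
Your proof is correct. Note, however, that the paper does not actually prove Theorem \ref{t01}: it is quoted verbatim from \cite[Corollary 2.8]{ch09}, so there is no in-paper proof to compare against. The closest internal analogue is the paper's proof of the graded version, Theorem \ref{c11}, which follows the same outline as yours — reduce to the null-graph criterion, extract two nontrivial ideals with $I\cap J=\{0\}$ and $I+J=R$ — but then concludes via maximality and quotients, writing $R\cong R/I\times R/J$ with each quotient a (graded) field. You instead realize the decomposition internally through orthogonal idempotents as $R\cong I\times J$ and verify directly that each summand has no nontrivial ideals. The two routes are equivalent, since $R=I\oplus J$ gives $R/I\cong J$ and $R/J\cong I$; yours has the small advantage of not needing to first establish that $I$ and $J$ are maximal, at the cost of the idempotent bookkeeping and the properness/nonzero checks you already flagged, all of which you carry out correctly.
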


\begin{thm}\label{t1}
Let $R$ be a $G-$graded ring. Then $Gr_G(R)$ is disconnected if and only if $Gr_G(R)\cong N_n$ for some $n\geq 2$.
\end{thm}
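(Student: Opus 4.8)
The statement is an equivalence, and I would dispatch the easy direction first. If $Gr_G(R)\cong N_n$ for some $n\geq 2$, then the graph has at least two vertices and no edges whatsoever, so any two vertices lie in distinct connected components and $Gr_G(R)$ is disconnected. The substance of the theorem is the converse, which I would prove in contrapositive form: \emph{if $Gr_G(R)$ has at least one edge, then it is connected.} Establishing this shows that a disconnected graph has no edges, i.e. is a null graph; since a disconnected graph automatically has at least two vertices, it is then $N_n$ for some $n\geq 2$.

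So suppose $I\cap J\neq\{0\}$ for two distinct vertices $I,J$, and let $K$ be an arbitrary vertex; the goal is to connect $K$ to $I$ by a path. If $I\cap K\neq\{0\}$ or $J\cap K\neq\{0\}$, then $K$ is already adjacent to $I$ or to $J$, and in the latter case $K-J-I$ does the job; so I may assume $I\cap K=\{0\}$ and $J\cap K=\{0\}$. Next consider $I+K$, which is a $G$-graded left ideal by Lemma \ref{B}. If $I+K\neq R$, it is a genuine vertex (nontrivial since it contains $I$, proper since it is not $R$) meeting both $I$ and $K$ nontrivially, so $I-(I+K)-K$ is a path; here $I+K$ is distinct from $I$ and from $K$, since $I+K=I$ or $I+K=K$ would force $K\subseteq I$ or $I\subseteq K$, contradicting $I\cap K=\{0\}$. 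Running the same argument with $J$ in place of $I$ handles the case $J+K\neq R$. The only remaining possibility is $I+K=J+K=R$.

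This last case is the main obstacle, and it is where Lemma \ref{B} is invoked a second time. Since $I\cap J$ is again a $G$-graded left ideal, so is $(I\cap J)+K$. If $(I\cap J)+K\neq R$, it is a vertex adjacent to both $K$ and $I$ (it contains the nonzero ideal $I\cap J\subseteq I$), giving the path $I-((I\cap J)+K)-K$. If instead $(I\cap J)+K=R$, then $(I\cap J)\cap K\subseteq I\cap K=\{0\}$ yields an internal direct sum $R=(I\cap J)\oplus K$, while $I+K=R$ together with $I\cap K=\{0\}$ gives $R=I\oplus K$. A short additive computation using $I\cap J\subseteq I$ then forces $I=I\cap J$, hence $I\subseteq J$; feeding $I\subseteq J$ back into $R=I\oplus K=J\oplus K$ forces $J\subseteq I$ as well, so $I=J$, contradicting that $I$ and $J$ are distinct vertices. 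Thus every case either produces a path from $K$ to $I$ or is impossible, proving connectivity. I expect the only delicate points to be the bookkeeping that the auxiliary ideals $I+K$, $I\cap J$, and $(I\cap J)+K$ are legitimate (nontrivial, proper, graded) vertices — which is exactly what Lemma \ref{B} guarantees — and the final complement argument; note that all of its module-theoretic steps are purely additive, so no commutativity of $R$ is needed.
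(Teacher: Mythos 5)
Your proposal is correct and follows essentially the same route as the paper's proof: the decisive step in both is to form the graded ideal $(I\cap J)+K$, observe that it yields a path unless it equals $R$, and then use the additive computation with $I\cap K=J\cap K=\{0\}$ to force $I=I\cap J=J$, a contradiction. Your extra preliminary cases ($I+K\neq R$, $J+K\neq R$) and the contrapositive framing are harmless cosmetic differences.
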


\begin{proof}
Suppose that $Gr_G(R)$ is disconnected. For a contradiction, assume $I$ and $J$ are two adjacent vertices. So $I$, $J$, and $I\cap J$ belong to the same component of $Gr_G(R)$. Since $Gr_G(R)$ is disconnected, there is a vertex $K$ that is not connected to anyone of the vertices $I$, $J$, and $I\cap J$. If $(I\cap J)+K\neq R$ then $(I\cap J)-((I\cap J)+K)-K$ is a path connecting $I\cap J$ and $K$, a contradiction. So $(I\cap J)+K=R$. Now let $a\in I$. Then $a=t+c$ for some $t\in I\cap J$ and $c\in K$. So $a-t=c\in I\cap K=\{0\}$, consequently $a=t\in I\cap J$. This implies that $I= I\cap J$. Similarly, we get $J=I\cap J$. Hence we have $I=J$ a contradiction. Therefore $Gr_G(R)$ contains no edges, and hence it is a null graph.
\end{proof}

The following result is a direct consequence of Theorem \ref{t1}.

\begin{cor}\label{c1}
Let $R$ be a $G-$graded ring. If $Gr_G(R)$ is disconnected then  $R$ contains at least two $G-$graded minimal left ideals and every $G-$graded left ideal of $R$ is principal, graded minimal, and graded maximal.
\end{cor}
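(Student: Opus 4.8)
The plan is to extract the conclusion almost entirely from Theorem \ref{t1} and the characterization of isolated vertices in Lemma \ref{r1}(2), the only genuinely new work being the assertion that each graded left ideal is principal.

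First I would invoke Theorem \ref{t1}: since $Gr_G(R)$ is disconnected, it is isomorphic to $N_n$ for some $n\geq 2$. Consequently $hI^{\ast}(R)$ has at least two elements and the graph carries no edges whatsoever, so every vertex is isolated. By Lemma \ref{r1}(2), an isolated vertex is exactly a graded left ideal that is simultaneously $G$-graded minimal and $G$-graded maximal. Hence every nontrivial proper graded left ideal of $R$ is both graded minimal and graded maximal; in particular, because $n\geq 2$, the ring $R$ possesses at least two $G$-graded minimal left ideals.

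The remaining point is principality. Here I would fix an arbitrary $I\in hI^{\ast}(R)$ and exploit that $I$ is graded minimal. Since $I$ is graded and nonzero, the decomposition $I=\oplus_{\sigma\in G}(I\cap R_{\sigma})$ supplies a nonzero homogeneous element $x\in I\cap R_{\sigma}$ for some $\sigma$. I would then form the left ideal $Rx$. Because $x$ is homogeneous, $Rx$ is a graded left ideal: writing $r=\sum_{\tau}r_{\tau}$, each component $r_{\tau}x$ lies in $R_{\tau\sigma}$ and again belongs to $Rx$, so $Rx$ decomposes along the grading. Moreover $1\in R_{e}$ gives $x=1\cdot x\in Rx$, whence $Rx\neq\{0\}$, while $Rx\subseteq I\subsetneq R$ shows $Rx$ is proper. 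Thus $Rx$ is a nontrivial proper graded left ideal contained in $I$, and the graded minimality of $I$ forces $Rx=I$. Therefore $I=Rx$ is principal.

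The argument is short, and I expect the only delicate bookkeeping to be the verification that $Rx$ is a \emph{bona fide} graded left ideal and that it is nontrivial (both nonzero and proper), since these are precisely the hypotheses needed before minimality of $I$ may be applied; once that is in place, the identity $Rx=I$ and the three desired adjectives follow at once.
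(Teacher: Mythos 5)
Your proof is correct and follows exactly the route the paper intends: the paper offers no written proof, stating only that the corollary is ``a direct consequence of Theorem \ref{t1},'' and your argument is the natural unpacking of that claim via Lemma \ref{r1}(2), with the principality step correctly supplied by taking a nonzero homogeneous $x\in I$, checking that $Rx$ is a nontrivial proper graded left ideal, and invoking graded minimality to conclude $Rx=I$.
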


\begin{thm}\label{c11}
Let $R$ be a commutative $G-$graded ring. Then $Gr_G(R)$ is disconnected if and only if $R\cong R_1\times R_2$ where $R_1$ and $R_2$ are $G-$graded fields.
\end{thm}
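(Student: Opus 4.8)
The plan is to treat this as the graded counterpart of Theorem~\ref{t01}, leaning on Theorem~\ref{t1} and Corollary~\ref{c1}. The forward implication is the substantial one: from disconnectedness I would extract an internal direct sum decomposition $R = I_1 \oplus I_2$ into two graded ideals and then verify that each summand is a graded field. The reverse implication reduces to computing the graded ideals of a product of two graded fields and observing that there are exactly two nontrivial proper ones, meeting in $\{0\}$.

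For the forward direction, suppose $Gr_G(R)$ is disconnected. By Theorem~\ref{t1} it is a null graph $N_n$ with $n \geq 2$, so no two distinct vertices intersect nontrivially, and by Corollary~\ref{c1} every graded ideal is graded minimal and graded maximal, with at least two graded minimal ideals $I_1 \neq I_2$. Minimality together with the null-graph property forces $I_1 \cap I_2 = \{0\}$. Since $I_2 \not\subseteq I_1$ while $I_1 + I_2$ is a graded ideal by Lemma~\ref{B} properly containing the maximal ideal $I_1$, we get $I_1 + I_2 = R$. Hence $R = I_1 \oplus I_2$ as ideals, yielding orthogonal idempotents $e_1, e_2$ with $e_1 + e_2 = 1$ and $I_i = Re_i$. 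Comparing homogeneous components in $1 = e_1 + e_2$ and using $1 \in R_e$, the off-diagonal pieces satisfy $(e_1)_\sigma = -(e_2)_\sigma \in I_1 \cap I_2 = \{0\}$ for $\sigma \neq e$; thus $e_1, e_2 \in R_e$, each $I_i$ is a graded ring with homogeneous identity $e_i$, and $R \cong I_1 \times I_2$ as graded rings.

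The key remaining step is that $I_1$ (and symmetrically $I_2$) is a graded field. Given a nonzero homogeneous $a \in I_1$, the principal ideal $Ra$ is a nonzero graded ideal contained in $I_1$; were $Ra \neq I_1$ these would be distinct vertices with $Ra \cap I_1 = Ra \neq \{0\}$, i.e. adjacent, contradicting the null graph. Hence $Ra = I_1 \ni e_1$, so $e_1 = ra$ for some $r \in R$, and multiplying by $e_1$ produces an inverse of $a$ inside $I_1$. Thus every nonzero homogeneous element of $I_1$ is a unit, so the commutative graded ring $I_1$ is a graded field.

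For the converse, grade $R = R_1 \times R_2$ componentwise. The idempotents $(1,0)$ and $(0,1)$ are homogeneous of degree $e$, so every graded ideal of $R$ splits as $J_1 \times J_2$ with $J_i$ a graded ideal of $R_i$; since a graded field admits only the trivial graded ideals $\{0\}$ and itself, the nontrivial proper graded ideals of $R$ are exactly $R_1 \times \{0\}$ and $\{0\} \times R_2$. Their intersection is $\{0\}$, so $Gr_G(R) \cong N_2$ is disconnected. I expect the main obstacle to be ensuring throughout that the decomposition respects the grading --- concretely, that the structural idempotents are homogeneous of degree $e$ --- since without this the summands would fail to be graded rings and the phrase ``graded field'' would be unjustified.
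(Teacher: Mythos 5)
Your proof is correct, and it reaches the same decomposition as the paper (via Theorem~\ref{t1} and Corollary~\ref{c1}: two graded ideals $I_1,I_2$ with $I_1+I_2=R$ and $I_1\cap I_2=\{0\}$), but it executes the two delicate steps differently. The paper passes to quotients: since $I$ and $J$ are graded maximal, $R/I$ and $R/J$ are graded fields and $R\cong R/I\times R/J$ by the (graded) Chinese remainder isomorphism. You instead build the internal decomposition $R=I_1\oplus I_2$, check explicitly that the resulting idempotents are homogeneous of degree $e$ by comparing components of $1=e_1+e_2$ against $I_1\cap I_2=\{0\}$, and then verify the graded-field property of each summand directly from the null-graph hypothesis: any nonzero homogeneous $a\in I_1$ generates all of $I_1$, since otherwise $Ra$ and $I_1$ would be adjacent vertices. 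What your route buys is self-containedness --- you never need the facts that a quotient by a graded maximal ideal is a graded field or that the CRT map respects the grading, and your homogeneity check on $e_1,e_2$ makes explicit a point the paper's one-line converse leaves implicit. What it costs is length; the paper's quotient argument is two lines. Both arguments are sound, and your converse direction coincides with the paper's.
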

\begin{proof}
Assume $Gr_G(R)$ is disconnected. Then by Theorem \ref{t1} and Corollary \ref{c1}, $R$ has two $G-$graded maximal as well as $G-$graded minimal ideals $I$ and $J$ such that $I+J=R$ and $I\cap J=\{0\}$. Hence $R/I$ and $R/J$ are $G-$graded fields and $R\cong R/I\times R/J$. For the converse, assume that  $R\cong R_1\times R_2$ where $R_1$ and $R_2$ are $G-$graded fields. Then the $G-$graded ideals in $R$ are $R_1\times 0$ and $0\times R_2$. Hence $Gr_G(R)$ is disconnected.
\end{proof}

\begin{cor}\label{c101}
Let $R$ be a commutative $G-$graded ring. If $Gr_G(R)$ is connected, then every pair of $G-$graded maximal left ideals have non-trivial intersection.
\end{cor}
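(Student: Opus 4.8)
The plan is to argue by contraposition. I will assume that $R$ possesses two distinct $G$-graded maximal ideals $I$ and $J$ with $I\cap J=\{0\}$, and deduce that $Gr_G(R)$ is disconnected, contradicting the hypothesis. The driving force of the argument is Theorem \ref{c11}, whose characterization of disconnectedness for commutative $R$ is the condition $R\cong R_1\times R_2$ with $R_1,R_2$ graded fields. Thus the entire task reduces to manufacturing such a decomposition out of the two maximal ideals, after which Theorem \ref{c11} closes the argument at once.

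First I would establish that $I+J=R$. By Lemma \ref{B}, the sum $I+J$ is again a $G$-graded ideal, and it strictly contains $I$: if instead $I+J=I$, then $J\subseteq I$, and since $J$ is $G$-graded maximal it cannot be strictly contained in a proper graded ideal, forcing $J=I$ and contradicting $I\neq J$. Maximality of $I$ then yields $I+J=R$. Together with $I\cap J=\{0\}$, the Chinese Remainder Theorem supplies a ring isomorphism $R\to R/I\times R/J$ given by $r\mapsto(r+I,\,r+J)$.

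Next I would verify that this isomorphism is genuinely a \emph{graded} isomorphism and that each factor is a $G$-graded field. For the grading, one equips $R/I$ and $R/J$ with the quotient gradings and their product with the componentwise grading; a homogeneous element $r\in R_\sigma$ is sent to an element of degree $\sigma$, so the map respects degrees. That $R/I$ is a $G$-graded field follows from $I$ being $G$-graded maximal in the commutative ring $R$: the quotient has no nonzero proper graded ideals, and since the ideal generated by any nonzero homogeneous element is itself graded, every such element must be a unit, which is exactly the statement that $R/I$ is a graded field; the same reasoning applies to $R/J$.

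Having produced $R\cong R/I\times R/J$ as a product of two $G$-graded fields, Theorem \ref{c11} immediately forces $Gr_G(R)$ to be disconnected, completing the contrapositive. I expect the only point demanding real care to be the passage from graded maximality of $I$ to $R/I$ being a graded field, and the companion observation that the Chinese Remainder decomposition is graded rather than a mere ring isomorphism; by contrast, the reduction $I+J=R$ and the final invocation of Theorem \ref{c11} are routine.
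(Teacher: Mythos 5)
Your proof is correct and follows essentially the route the paper intends: the corollary is stated as an immediate consequence of Theorem \ref{c11}, and your argument (trivially intersecting graded maximal ideals force $I+J=R$, hence a graded CRT decomposition of $R$ into two graded fields, hence disconnectedness) is exactly the mechanism already used in the proof of that theorem. The points you flag as needing care — the gradedness of the isomorphism and the passage from graded maximality to the quotient being a graded field — are handled correctly.
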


\begin{rem}\label{c10}
Let $R$ be a $G-$graded ring with at least two distinct $G-$graded ideals. Since $Gr_G(R)$ is a subgraph of $G(R)$, it follows that if $Gr_G(R)$ is connected then so is $G(R)$. However, the converse of this statement is not always true. Indeed, Take a field $K$ and let $R=R_1\times R_2$ where $R_1=R_2=K[x,x^{-1}]$, with $\mathbb{Z}-$grading by $(R_i)_n=Kx^{n}$, $i=1,2$. Since $R_1$ and $R_2$ are $\mathbb{Z}-$graded fields, $Gr_{\mathbb{Z}}(R)$ is disconnected. However, $R_1$ and $R_2$ are not fields, and hence $G(R)$ is connected. In fact, in light of Theorem \ref{t001}, Theorem \ref{t01}, and Corollary \ref{c11} we have the following result.
\end{rem}
\begin{cor}
Let $R$ be a commutative $Z-$graded ring such that $Gr_{\mathbb{Z}}(R)$ is disconnected. Then $R\cong R_1\times R_2$ such that one of the following is true:\\
(1) $R_1$ and $R_2$ are fields, and hence $G(R)$ is disconnected\\
(2) either $R_1$ or $R_2$ is isomorphic to $K[x,x^{-1}]$ for some field(s) $K$. Consequently $G(R)$ is connected
\end{cor}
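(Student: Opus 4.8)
The plan is to combine the structural classification already available with the connectivity criterion for the ungraded intersection graph. Since $R$ is commutative and $Gr_{\mathbb{Z}}(R)$ is disconnected, Theorem \ref{c11} immediately yields a decomposition $R \cong R_1 \times R_2$ in which both $R_1$ and $R_2$ are $\mathbb{Z}$-graded fields. This is the entry point, and it reduces the problem to understanding what a commutative $\mathbb{Z}$-graded field can be.

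Next I would invoke Theorem \ref{t001}: each $\mathbb{Z}$-graded field $R_i$ has $(R_i)_0$ a field $K_i$, and either $R_i = K_i$ carries the trivial grading (so $R_i$ is an ordinary field) or $R_i \cong K_i[x,x^{-1}]$ with grading $(R_i)_k = K_i x^k$. Thus each factor is either a field or a Laurent polynomial ring over a field, and splitting on these possibilities produces exactly the dichotomy in the statement: either both factors are fields (case (1)), or at least one factor is of the form $K[x,x^{-1}]$ (case (2)).

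The two conclusions then follow from Theorem \ref{t01}, which states that for a commutative ring, $G(R)$ is disconnected precisely when $R$ is a direct product of two fields. In case (1) the decomposition $R \cong R_1 \times R_2$ is literally a product of two fields, so $G(R)$ is disconnected. In case (2) the task is to verify that $R$ is \emph{not} a product of two fields, so that the same theorem forces $G(R)$ to be connected. The key point is that $K[x,x^{-1}]$ is not a field: the element $x-1$ is a nonzero non-unit (it is not of the form $c x^n$), yet it is not a zero divisor because $K[x,x^{-1}]$ is a domain. Placing the Laurent factor first, say, the element $(x-1,\,1) \in R_1 \times R_2$ is then a nonzero non-unit that is not a zero divisor, whereas in any direct product of two fields every nonzero non-unit is a zero divisor. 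This contradiction shows $R$ is not a product of two fields, and hence $G(R)$ is connected.

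The only genuinely non-mechanical step is this last verification in case (2): one must rule out the possibility that $R$, although presented with a Laurent factor, is secretly isomorphic to a product of two fields. The zero-divisor witness above settles it cleanly; alternatively one could count ideals, noting that $K[x,x^{-1}]$ is a PID with infinitely many maximal ideals while a product of two fields has only four ideals. Everything else is a direct application of the quoted results.
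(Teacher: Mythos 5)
Your proposal is correct and follows the same route the paper intends: the paper offers no written proof beyond citing Corollary \ref{c11}, Theorem \ref{t001}, and Theorem \ref{t01}, and you combine exactly those three results in the same way. Your additional verification that a product involving a Laurent factor cannot be secretly isomorphic to a product of two fields (via the non-unit non-zero-divisor $(x-1,1)$) is a worthwhile detail the paper leaves implicit, and it is the only step that genuinely needed an argument.
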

\begin{thm}\label{t2}
Let $R$ be a $G-$graded ring. If $Gr_G(R)$ is connected then $diam(Gr_G(R)) \leq 2$.
\end{thm}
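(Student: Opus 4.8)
The plan is to show that every pair of non-adjacent vertices has a common neighbour; since $Gr_G(R)$ is assumed connected this forces the distance between any two vertices to be at most $2$, and adjacent vertices are at distance $1$, so $diam(Gr_G(R))\leq 2$ follows. Accordingly, I would fix two distinct non-adjacent vertices $I,J\in hI^{\ast}(R)$, which by definition means $I\cap J=\{0\}$, and try to exhibit a single $G$-graded left ideal adjacent to both. The first and most natural candidate is $I+J$, which is again a $G$-graded left ideal by Lemma \ref{B} and which satisfies $(I+J)\cap I=I\neq\{0\}$ and $(I+J)\cap J=J\neq\{0\}$. Hence if $I+J\neq R$, then $I+J\in hI^{\ast}(R)$ is a common neighbour of $I$ and $J$, and that pair is handled immediately.

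The substance of the argument is the remaining case $I+J=R$, i.e. $R=I\oplus J$, and this is the step I expect to be the main obstacle, because here the obvious candidate $I+J$ is no longer a vertex. My plan is to bring in the connectivity hypothesis: since $I$ and $J$ are two distinct vertices and the graph is connected, $I$ cannot be isolated, so there exists a vertex $K$ with $K\cap I\neq\{0\}$ and $K\neq I$ (and necessarily $K\neq J$, since $K\cap I\neq\{0\}=J\cap I$). If $K\cap J\neq\{0\}$ then $K$ is already the desired common neighbour. Otherwise $K\cap J=\{0\}$, and I would pass to the $G$-graded left ideal $L:=K\cap I$ (graded by Lemma \ref{B}), which is nonzero, contained in $I$, and meets $J$ trivially, and then test the candidate $L+J$.

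The device that closes this case is the modular (Dedekind) law for left ideals. If $L+J\neq R$, then $L+J\in hI^{\ast}(R)$ meets both $I$ (in $L\neq\{0\}$) and $J$ (in $J\neq\{0\}$), so it is a common neighbour. If instead $L+J=R$, then since $L\subseteq I$ the modular law gives $I=I\cap(L+J)=L+(I\cap J)=L$, whence $I=L=K\cap I\subseteq K$; applying the modular law a second time, now with $I\subseteq K$ and $I+J=R$, yields $K=K\cap(I+J)=I+(K\cap J)=I$, contradicting $K\neq I$. Thus a common neighbour always exists and every non-adjacent pair is at distance exactly $2$. I would stress two points in the writeup: that all ideals produced along the way ($I+J$, $K\cap I$, $L+J$) remain $G$-graded thanks to Lemma \ref{B}, so they are legitimate vertices; and that the whole argument uses only sums, intersections, and the modular law, never central idempotents, so no commutativity of $R$ is required and the statement holds for arbitrary $G$-graded rings.
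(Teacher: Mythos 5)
Your proof is correct. It starts the same way as the paper's (reduce to $I\cap J=\{0\}$ and look at sums of $J$ with graded ideals contained in $I$), but it closes the hard case by a genuinely different device. The paper assumes $K+J=R$ for \emph{every} graded left ideal $K\subseteq I$, deduces by an elementwise computation (which is exactly your modular law in disguise, $I=I\cap(K+J)=K+(I\cap J)=K$) that $I$ is $G$-graded minimal, then uses connectivity via Lemma \ref{r1} to conclude $I$ is not also maximal, producing a graded ideal $Y\supsetneq I$; a second elementwise argument shows $Y\cap J\neq\{0\}$, and $I-Y-J$ is the required path. You instead use connectivity only to produce a neighbour $K$ of $I$, pass to $L=K\cap I$, and apply the modular law twice to show that the residual sub-case $L+J=R$ is outright contradictory. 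Both arguments invoke connectivity at the same point (namely that $I$ is not an isolated vertex), but your route avoids the minimality discussion and the search for an ideal above $I$, at the price of quoting the modular law for submodules of $_RR$ explicitly; this is a clean and slightly shorter way to finish, and like the paper's proof it needs no commutativity. Two small points worth a line in the writeup: to use $I+J$ (resp.\ $L+J$) as the middle vertex of a path you should observe it is distinct from $I$ and from $J$, which is immediate since $I+J=I$ would force $J\subseteq I$ and hence $J=I\cap J=\{0\}$, and $L+J=J$ would force $L\subseteq I\cap J=\{0\}$; and all the ideals you form are graded by Lemma \ref{B}, as you note.
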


\begin{proof}
Let $I,J$ be two vertices in $Gr_G(R)$. If $I\cap J\neq \{0\}$ then $d(I,J)=1$. Suppose  $I\cap J= \{0\}$.
If there exits a $G-$graded left ideal $K\subseteq I$ such that $K+J\neq R$, then $I-(K+J)-J$ is a path, and hence $d(I,J)=2$. So we may assume $K+J=R$ for every  $G-$graded left ideal $K\subseteq I$. Now we show that $I$ is $G-$graded minimal. Let $K\subseteq I$ be a $G-$graded left ideal, and let $x\in I$. Then $x=y+b$ for some $y\in I$ and $b\in J$. So we have $x-y=b\in I\cap J=\{0\}$, and hence $x=y\in K$. Consequently $I=K$. Therefore $I$ is $G-$graded minimal. Since $Gr_G(R)$ is connected, by Lemma \ref{r1}, $I$ is not $G-$graded maximal, and so there exists $G-$graded left ideal $Y$ such that $I\subsetneq Y$. Assume $Y\cap J=\{0\}$. Let $y\in Y$ then $y=a+b$ for some $a\in I$ and $b\in J$. Hence $y-a=b\in Y\cap J=\{0\}$, which yields $y=a$, and hence $Y=I$, a contradiction. So $Y\cap J\neq\{0\}$. Hence $I-Y-J$ is a path. Therefore $d(I,J)\leq 2$. This completes the proof.
\end{proof}

\begin{thm}\label{t51}
Let $R$ be a commutative $G-$graded ring. Then $R$ is $G-$graded domain if and only if $R$ is $G-$graded reduced and $Gr_G(R)$ is complete.
\end{thm}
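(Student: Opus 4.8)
The plan is to prove the two implications separately, with the converse carrying essentially all of the content. Throughout I will use that a nonzero graded ideal contains a nonzero homogeneous element, and that intersections of graded ideals are again graded by Lemma \ref{B}.

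For the forward direction, assume $R$ is a $G$-graded domain. Reducedness is immediate: a nonzero homogeneous nilpotent $a$ with $a^n=0$ taken at its least vanishing power $n\geq 2$ yields $a\cdot a^{n-1}=0$ with $a^{n-1}$ homogeneous and nonzero, contradicting the absence of homogeneous zero-divisors. For completeness I would take any two vertices $I,J\in hI^{\ast}(R)$; being nonzero graded ideals, each contains a nonzero homogeneous element, say $a\in I$ and $b\in J$. Commutativity gives $ab=ba\in I\cap J$, and the graded-domain hypothesis forces $ab\neq 0$ since $a,b$ are homogeneous and nonzero. Hence $I$ and $J$ are adjacent, so $Gr_G(R)$ is complete.

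For the converse, assume $R$ is $G$-graded reduced and $Gr_G(R)$ is complete, and argue by contradiction: suppose there exist nonzero homogeneous $a,b$ with $ab=0$. The idea is to feed the principal graded ideals $Ra$ and $Rb$ into the completeness hypothesis. First I would check these are legitimate vertices: they are nonzero (as $a,b\neq 0$) and proper, because $a$ (resp.\ $b$) cannot be a unit, since a unit $a$ would force $b=a^{-1}(ab)=0$. Completeness then yields $Ra\cap Rb\neq\{0\}$. As this intersection is graded by Lemma \ref{B}, it contains a nonzero homogeneous element $c$, and writing $c=ra=sb$ I would compute $c^2=rs\,(ab)=0$. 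This exhibits a nonzero homogeneous nilpotent, contradicting reducedness, so $R$ must be a $G$-graded domain.

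The delicate points are all bookkeeping: verifying that $Ra$ and $Rb$ are genuinely proper nontrivial graded vertices, handling the degenerate possibility $Ra=Rb$ (where $a\in Ra\cap Rb$ already makes the intersection nonzero, so completeness, which only asserts adjacency of \emph{distinct} vertices, need not be invoked), and using Lemma \ref{B} to replace an arbitrary nonzero element of the intersection by a homogeneous one so that reducedness can be applied. The conceptual heart of the argument, and the step I expect to be least obvious, is recognizing that completeness manufactures an element $c\in Ra\cap Rb$ whose square is $rs(ab)=0$, thereby converting a homogeneous zero-divisor into a homogeneous nilpotent.
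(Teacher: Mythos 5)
Your argument is correct and follows essentially the same route as the paper: the forward direction produces the adjacency of $I$ and $J$ via a nonzero homogeneous product $ab\in I\cap J$, and the converse feeds the principal graded ideals generated by a putative homogeneous zero-divisor pair into completeness to extract a homogeneous $c\in Ra\cap Rb$ with $c^2\in RaRb=\{0\}$, contradicting reducedness. Your extra bookkeeping (properness of $Ra$ and $Rb$, and the degenerate case $Ra=Rb$) is sound and in fact slightly more careful than the paper's version.
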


\begin{proof}
Suppose $R$ is $G-$graded domain. Then clearly $R$ is $G-$graded reduced. Now, let $I,J\in hI^{\ast}(R)$, and take $0\neq a\in I\cap h(R)$ and $0\neq b\in J\cap h(R)$. Then $0\neq ab\in I\cap J$. Hence $I$ and $J$ are adjacent. This proves that $Gr_G(R)$ is complete.  Conversely, suppose that $R$ is $G-$graded reduced and $Gr_G(R)$ is complete. Assume that there are $a,b\in h(R)\setminus\{0\}$ such that $ab=0$. Since $Gr_G(R)$ is complete then there exists $0\neq c\in \left\langle a \right\rangle \cap\left\langle b\right\rangle\cap h(R)$. Hence $c^2\in  \left\langle a\right\rangle\left\langle b\right\rangle=\{0\}$. This implies that $c^2=0$, a contradiction. Therefore $R$ is $G-$graded domain.
\end{proof}

\begin{thm}\label{t52}
If $R$ is a $G-$graded Artinian ring such that $Gr_G(R)$ is not null graph, then  the followings are equivalent:\\
(1) $Gr_G(R)$ is regular\\
(2) $R$ contains a unique $G-$graded minimal ideal.\\
(2) $Gr_G(R)$ is complete.
\end{thm}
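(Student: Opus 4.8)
The plan is to prove the cycle of implications $(1)\Rightarrow(2)\Rightarrow(3)\Rightarrow(1)$, of which $(3)\Rightarrow(1)$ is immediate since every complete graph is regular. Two preliminary observations will be used throughout. First, since $Gr_G(R)$ is not a null graph it has at least one edge, so by Theorem \ref{t1} it is connected; in particular $hI^{\ast}(R)\neq\emptyset$. Second, because $R$ is $G$-graded Artinian, the descending chain condition guarantees that every nonzero $G$-graded left ideal $I$ contains a $G$-graded minimal ideal (take a minimal element among the nonzero graded subideals of $I$); applied to $I=R$, this shows $R$ has at least one $G$-graded minimal ideal, so the negation of (2) is precisely the existence of two distinct minimal ideals.

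For $(2)\Rightarrow(3)$, let $M$ be the unique $G$-graded minimal ideal. By the observation above, every vertex $I\in hI^{\ast}(R)$ contains some graded minimal ideal, which must equal $M$; hence $M\subseteq I$ for all vertices $I$. Consequently, for any two vertices $I,J$ we have $M\subseteq I\cap J$, so $I\cap J\neq\{0\}$ and $I,J$ are adjacent. Thus $Gr_G(R)$ is complete.

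The substance of the argument is $(1)\Rightarrow(2)$, which I will establish by contraposition: assuming $R$ has two distinct $G$-graded minimal ideals $M_1\neq M_2$, I produce two vertices of different degree, so $Gr_G(R)$ is not regular. As distinct minimal ideals meet trivially, $M_1\cap M_2=\{0\}$, so $M_1,M_2$ are non-adjacent. Put $A=M_1+M_2$, which is a graded ideal by Lemma \ref{B} and strictly contains both $M_1$ and $M_2$. I compare $N(M_1)$ with $N(A)$. By Lemma \ref{r1}(1) the neighbors of the minimal ideal $M_1$ are exactly the graded ideals strictly containing it; any such ideal $B\neq A$ satisfies $B\cap A\supseteq M_1\neq\{0\}$ and hence lies in $N(A)$, so $N(M_1)\setminus N(A)=\{A\}$. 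Conversely, both $M_1$ and $M_2$ lie in $N(A)$ (each is strictly contained in $A$) but not in $N(M_1)$ (one is $M_1$ itself, the other is non-adjacent to $M_1$), so $N(A)\setminus N(M_1)$ contains the two distinct vertices $M_1,M_2$. Thus $N(A)$ is obtained from $N(M_1)$ by deleting the single vertex $A$ and adjoining at least two distinct new vertices, which yields $deg(A)\geq deg(M_1)+1$, contradicting regularity. This closes the chain $(1)\Rightarrow(2)\Rightarrow(3)\Rightarrow(1)$.

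The main obstacle is precisely this degree count: one must pin down both neighborhood inclusions $N(M_1)\setminus N(A)=\{A\}$ and $\{M_1,M_2\}\subseteq N(A)\setminus N(M_1)$ carefully, and confirm that removing only $A$ while adjoining $M_1,M_2$ genuinely raises the degree. When $hI^{\ast}(R)$ is finite this is immediate. The one delicate point is the possibility that these degrees are infinite, in which case a proper gain of neighbors need not strictly increase the degree; I would address this by first showing the hypotheses force $Gr_G(R)$ to be finite—for instance by observing that in a graded Artinian ring the graded socle is an essential ideal, so if it is a proper vertex it is adjacent to every other vertex, and regularity then propagates this to every vertex, again ruling out two non-adjacent minimal ideals.
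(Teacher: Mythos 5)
Your overall route is the same as the paper's: $(3)\Rightarrow(1)$ is trivial, $(2)\Rightarrow(3)$ uses the descending chain condition to place the unique graded minimal ideal inside every vertex, and $(1)\Rightarrow(2)$ is a degree comparison between a minimal ideal and a larger vertex adjacent to both minimal ideals. The only real difference is the choice of comparison vertex: the paper takes a common neighbour $K$ of $M_1$ and $M_2$, which exists because the non-null hypothesis forces connectivity and $d(M_1,M_2)\leq 2$ (Theorems \ref{t1} and \ref{t2}), whereas you take $A=M_1+M_2$. Your neighbourhood bookkeeping, $N(M_1)\setminus N(A)=\{A\}$ together with $\{M_1,M_2\}\subseteq N(A)\setminus N(M_1)$, is actually more careful than the paper's bare assertion that $N(I)\subset N(K)$ (which, read literally, fails because $K\in N(I)\setminus N(K)$); the intended degree inequality is exactly the one you prove.

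There is, however, one step you did not verify: for your argument $A=M_1+M_2$ must be a \emph{vertex}, i.e.\ $A\neq R$, and nothing you wrote rules out $M_1+M_2=R$. The paper's choice of $K$ avoids this automatically, since a common neighbour is by definition a proper nontrivial graded ideal. The gap is repairable: if $M_1\oplus M_2=R$ with both summands graded minimal, then by modularity any $I\in hI^{\ast}(R)$ meeting $M_1$ nontrivially equals $M_1+(I\cap M_2)$ and hence equals $M_1$ (or $R$), and any $I$ with $I\cap M_1=\{0\}$ embeds in $R/M_1\cong M_2$ and is therefore graded minimal; so every vertex would be minimal, distinct minimal ideals meet trivially, and $Gr_G(R)$ would be null, contradicting the hypothesis. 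You should add this observation. Separately, your concern about infinite degrees is legitimate (a strict gain of neighbours need not increase an infinite cardinal), but the socle argument you sketch does not close it—in an infinite graph, having the same infinite degree as an essential vertex does not force adjacency to every vertex. The paper's own proof silently has the same defect, so on this point you are no worse off, and indeed more honest, than the source.
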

\begin{proof}
$(1)\Rightarrow (2)$
Suppose $Gr_G(R)$ is regular. Seeking a contradiction, assume that $R$ contains two distinct $G-$graded minimal ideals $I$ and $J$. Then $I$ and $J$ are nonadjacent. Since $d(I,J)\leq 2$, there is a $G-$graded left ideal $K$ that adjacent to both $I$ and $J$. Hence by minimality of $I$, we get $I\subseteq K$. This implies that $N(I)\subset N(K)$, consequently $deg(K)>deg(I)$, a contradiction. Hence $R$ contains a unique $G-$graded minimal ideal.

$(2)\Rightarrow (3)$
Suppose $R$ contains a unique $G-$graded minimal left ideal, say $I$. Let $J$ and $K$ be two $G-$graded left ideals in $R$. Since $R$ is a $G-$graded Artinian,  we have $I\subseteq J$ and $I\subseteq K$, and so $J$ and $K$ are adjacent. Therefore $Gr(R)$ is complete.

$(2)\Rightarrow (3)$ Straightforward

\end{proof}
\section{Domination, clique, and girth of $Gr_G(R)$}\label{sec3}
A commutative $G-$graded ring $R$ is called $G-$graded decomposable if there is a pair of nontrivial $G-$graded ideals $S$ and $T$ of $R$, such that  $R\cong S\times T$. If $R$ is not $G-$graded decomposable then it is called $G-$graded indecomposable.

\begin{lem}
Let $S$ and $T$ be commutative $G-$graded rings with $1$. Then $I$ is a $G-$graded ideal of $S\times T$ if and only if there are $G-$graded ideals $I_s$ of $S$ and $I_t$ of $T$ such that $I=I_s\times I_t$.
\end{lem}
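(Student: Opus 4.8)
The plan is to first pin down the grading on the product ring and then reduce the whole statement to multiplication by the two coordinate idempotents, exploiting that these are homogeneous of degree $e$. Recall that $R=S\times T$ carries the natural $G$-grading $R_\sigma=S_\sigma\times T_\sigma$, so that the identity $1_R=(1_S,1_T)$ and the two idempotents $e_1=(1_S,0)$, $e_2=(0,1_T)$ all lie in the identity component $R_e$, since $1_S\in S_e$ and $1_T\in T_e$. This homogeneity of $e_1$ and $e_2$ is exactly the feature that upgrades the classical ungraded decomposition of ideals of a product to a graded one.

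For the backward implication, suppose $I=I_s\times I_t$ with $I_s$ a $G$-graded ideal of $S$ and $I_t$ a $G$-graded ideal of $T$. That $I$ is an ideal of $R$ is immediate. To see it is graded I would compute, for each $\sigma\in G$, the intersection $I\cap R_\sigma=(I_s\cap S_\sigma)\times(I_t\cap T_\sigma)$, and then sum over $\sigma$. Using $I_s=\oplus_\sigma(I_s\cap S_\sigma)$ and $I_t=\oplus_\sigma(I_t\cap T_\sigma)$, this yields $\oplus_\sigma(I\cap R_\sigma)=I_s\times I_t=I$, which is precisely the defining property of a $G$-graded ideal. This direction is routine.

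For the forward implication, let $I$ be a $G$-graded ideal of $R$. I would start from the standard decomposition $I=Ie_1\oplus Ie_2$, which holds because $e_1+e_2=1$ while $Ie_1\subseteq S\times 0$ and $Ie_2\subseteq 0\times T$ meet only in $0$. Setting $I_s=\{s\in S:(s,0)\in I\}$ and $I_t=\{t\in T:(0,t)\in I\}$, one checks $Ie_1=I_s\times 0$ and $Ie_2=0\times I_t$, so $I=I_s\times I_t$; moreover $I_s=\pi_S(I)$ and $I_t=\pi_T(I)$ are ideals of $S$ and $T$ as images of the ideal $I$ under the coordinate projections (the identity $I_s=\pi_S(I)$ comes from $(s,t)e_1=(s,0)$).

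It remains to promote $I_s$ and $I_t$ to graded ideals, and this is the crux of the argument. Here I would invoke the homogeneity of $e_1$: since $S\times 0=Re_1$ is a graded ideal of $R$ and $I$ is graded, Lemma~\ref{B} gives that $Ie_1=I\cap(S\times 0)$ is again a graded ideal of $R$. Unwinding $Ie_1=I_s\times 0$ componentwise via $(I_s\times 0)\cap R_\sigma=(I_s\cap S_\sigma)\times 0$ shows $I_s=\oplus_\sigma(I_s\cap S_\sigma)$, i.e. $I_s$ is a graded ideal of $S$; the symmetric argument with $e_2$ handles $I_t$. I expect this last step to be the main obstacle, since one must not merely recover the ungraded factorization but verify that each factor inherits the grading, and the clean route is to observe that the relevant idempotents sit in degree $e$, so that $I\cap(S\times 0)$ and $I\cap(0\times T)$ stay graded by Lemma~\ref{B}.
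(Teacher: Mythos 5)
Your proof is correct and complete: the natural grading $R_\sigma=S_\sigma\times T_\sigma$ puts the idempotents $(1_S,0)$ and $(0,1_T)$ in degree $e$, so $I\cap(S\times 0)$ and $I\cap(0\times T)$ remain graded by the paper's lemma on intersections of graded ideals, which is exactly what is needed to see that the two factors of the classical decomposition are themselves graded. The paper states this lemma without any proof, so there is no argument to compare against; your route is the standard one and fills that gap cleanly.
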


\begin{thm}\label{t6} Let $R$ be commutative $G-$graded ring. Then $\gamma(Gr_G(R))\leq 2$. Furthermore the followings are true.\\
(1) If $R$ is $G-$graded indecomposable then  $\gamma(Gr_G(R))=1$.\\
(2) If $R\cong S\times T$ for some nontrivial graded ideals $S,T$ of $R$ then $\gamma(Gr_G(R))= 2$ if and only if $\gamma(Gr_G(S))= \gamma(Gr_G(T))= 2$.
\end{thm}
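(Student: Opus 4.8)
The plan is to reformulate the whole statement in terms of graded essential ideals. By Lemma \ref{r1}(3), a single vertex $E$ dominates $Gr_G(R)$ precisely when $E$ is $G$-graded essential, so $\gamma(Gr_G(R))=1$ if and only if $R$ possesses a proper nontrivial $G$-graded essential ideal. With this dictionary in hand, the global bound $\gamma(Gr_G(R))\le 2$ splits into two cases. If $R$ is $G$-graded decomposable, say $R\cong S\times T$, then by the preceding lemma $S\times 0$ and $0\times T$ are proper nonzero graded ideals, and every vertex $I_S\times I_T$ has $I_S\ne 0$ or $I_T\ne 0$ and hence meets one of them; thus $\{S\times 0,\,0\times T\}$ is a dominating set and $\gamma\le 2$. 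If $R$ is $G$-graded indecomposable, the bound follows from part (1), which I prove next.

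For part (1), I would produce a proper graded essential ideal directly. Fix any vertex $J_0\in hI^{\ast}(R)$ and, using Zorn's lemma on the poset of $G$-graded ideals $C$ with $C\cap J_0=\{0\}$ (a chain union of graded ideals is again graded and stays disjoint from $J_0$), choose such a $C$ maximal. The key claim is that $E:=C\oplus J_0$, which is graded by Lemma \ref{B}, is essential: if a nonzero graded ideal $A$ satisfied $A\cap E=\{0\}$, then a short computation shows $(C+A)\cap J_0=\{0\}$, so maximality of $C$ forces $A\subseteq C\subseteq E$ and hence $A=A\cap E=\{0\}$, a contradiction. It remains to see $E\ne R$: if $E=R$ then $R=C\oplus J_0$ with $J_0\ne\{0\}$, and since $C=\{0\}$ would force $J_0=R$ (impossible, as $J_0$ is proper), both summands are nonzero graded ideals, yielding a genuine $G$-graded decomposition $R\cong C\times J_0$ and contradicting indecomposability. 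Thus $E$ is a proper graded essential vertex and $\gamma(Gr_G(R))=1$.

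For part (2), write $R\cong S\times T$. Every graded ideal of $R$ is of the form $I_S\times I_T$ by the preceding lemma, and a direct check gives the pivotal fact: $E=E_S\times E_T$ is essential in $R$ if and only if $E_S$ is essential in $S$ and $E_T$ is essential in $T$ (if, say, $E_S$ misses a nonzero graded ideal $A_S$ of $S$, then $A_S\times 0$ is a nonzero graded ideal of $R$ disjoint from $E$). Since $\gamma(Gr_G(R))\le 2$ and $R$ has at least two vertices, the equality $\gamma(Gr_G(R))=2$ is equivalent to the absence of a proper essential graded ideal in $R$, which by the pivotal fact is equivalent to the absence of a proper essential graded ideal in each of $S$ and $T$, that is, to $\gamma(Gr_G(S))\ne 1$ and $\gamma(Gr_G(T))\ne 1$. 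Under the standing assumption that $S$ and $T$ are not graded fields (so that $Gr_G(S)$ and $Gr_G(T)$ are nonempty and their domination numbers lie in $\{1,2\}$), this reads exactly as $\gamma(Gr_G(S))=\gamma(Gr_G(T))=2$, as required.

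The routine parts are the two disjointness computations and the observation that sums and intersections stay graded (Lemma \ref{B}). The main obstacle is the essentiality of $C\oplus J_0$ in part (1): the whole argument hinges on the maximal-complement construction, and one must check that the Zorn step and the ``$E\ne R$'' conclusion genuinely exploit indecomposability rather than any finiteness hypothesis. A secondary point needing care is the forward direction of part (2): if one factor is a graded field its domination number degenerates to $0$ and the clean equivalence breaks, so the hypothesis must be read as excluding graded-field factors.
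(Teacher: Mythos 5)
Your proof is correct, but it takes a genuinely different route from the paper, most visibly in part (1). The paper's argument there is shorter: it picks a $G$-graded maximal left ideal $M$ and observes that $M\cap J=\{0\}$ for a vertex $J$ would force $M+J=R$ and hence a graded decomposition $R\cong M\times J$, so $\{M\}$ dominates. Your construction instead builds a proper graded \emph{essential} ideal $E=C\oplus J_0$ from a Zorn-maximal graded complement $C$ of an arbitrary vertex $J_0$, and invokes Lemma~\ref{r1}(3) to convert essentiality into domination; the complement computation you sketch ($(C+A)\cap J_0=\{0\}$, hence $A\subseteq C$, hence $A=\{0\}$) is the standard one and is sound, as is the step showing $E\neq R$ via indecomposability. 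The paper's route is more economical (one Zorn application to get $M$, no complement argument), while yours buys a uniform ``essential-ideal'' dictionary that you then reuse in part (2). There your treatment is actually more careful than the paper's: the paper asserts in one line that $\{I\times J\}$ dominates $Gr_G(R)$ if and only if $\{I\}$ dominates $Gr_G(S)$ \emph{or} $\{J\}$ dominates $Gr_G(T)$, which is false as a statement about a fixed pair (take $I$ dominating and $J=\{0\}$; then $\{0\}\times T$ misses $I\times\{0\}$); the correct product criterion is the conjunctive one you prove via essentiality of each factor. Your closing caveat is also well taken: when one factor is a graded field its domination number is $0$, and the biconditional in part (2) genuinely fails (e.g.\ a product of three graded fields grouped as $S\times(F_1\times F_2)$ has $\gamma=2$ while $\gamma(Gr_G(S))=0$), so the theorem needs the implicit hypothesis that neither factor is a graded field --- a gap present in the paper's own proof that your reformulation exposes.
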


\begin{proof}
If  $R\cong S\times T$ for some nontrivial graded ideals $S$ and $T$, then the $\{S\times \{0\}, \{0\}\times T\}$ is a dominating set, and hence $\gamma(Gr_G(R))\leq 2$.  Suppose that $R$ is $G-$graded indecomposable. Let $M$ be a $G-$graded maximal left ideal of $R$. If there exists $J\in hI^{\ast}(R)$ such that $M\cap J=\{0\}$ then $M+J=R$ and hence $R\cong M\times J$, a contradiction. So $M\cap J\neq\{0\}$ for all $J\in hI^{\ast}(R)$. Consequently  $\{M\}$ is a dominating set, and hence $\gamma(Gr_G(R))=2.$ Now suppose $R\cong S\times T$ for some nontrivial $G-$graded ideals $S$ and $T$. It is straightforward to show that $\{I\times J\}$ is a dominating set in $Gr_G(R)$ if and only if $\{I\}$ is dominating set in $Gr_G(S)$ or $\{J\}$ is dominating set in $Gr_G(T)$. This completes the proof.
\end{proof}

\begin{lem}\label{l18}
Let $R$ be a $G-$graded ring. If $\omega(Gr_G(R)) < \infty$, then R is $G-$graded Artinian.
\end{lem}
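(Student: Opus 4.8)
The plan is to prove the contrapositive: assuming $R$ is \emph{not} $G$-graded Artinian, I would exhibit an infinite clique in $Gr_G(R)$, which forces $\omega(Gr_G(R))=\infty$ and contradicts the hypothesis.

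First I would invoke the failure of the descending chain condition. Since $R$ is not $G$-graded Artinian, by definition there is an infinite strictly descending chain
$$I_1 \supsetneq I_2 \supsetneq I_3 \supsetneq \cdots$$
of $G$-graded left ideals of $R$. The key elementary observation is that such a chain can never reach $\{0\}$, because a strict inclusion lying below $\{0\}$ is impossible; hence every $I_n$ is nonzero. Moreover, since the chain is strictly descending, at most the first term $I_1$ can equal $R$, and after discarding it the chain is still infinite. Thus I may assume that every $I_n$ is a proper nontrivial $G$-graded left ideal, i.e.\ $I_n \in hI^{\ast}(R)$, so that each $I_n$ is a genuine vertex of $Gr_G(R)$.

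Next I would verify that these vertices are pairwise adjacent. For any pair of indices $m<n$, the inclusion $I_n \subseteq I_m$ yields $I_m \cap I_n = I_n \neq \{0\}$, so by Definition \ref{d0} the vertices $I_m$ and $I_n$ are adjacent. Consequently the set $\{I_n \mid n\geq 1\}$ induces a complete subgraph of $Gr_G(R)$ on infinitely many vertices, whence $\omega(Gr_G(R))=\infty$. This contradicts the assumption $\omega(Gr_G(R))<\infty$, and the contrapositive gives the claim.

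I do not anticipate a serious obstacle here; the only point requiring a little care is confirming that the chosen terms genuinely lie in $hI^{\ast}(R)$, namely that they are nonzero (automatic from strict descent) and proper (automatic after deleting a possible initial term equal to $R$). Everything else follows directly from the definition of adjacency, and in fact Lemma \ref{B} is not needed, since each intersection $I_m\cap I_n$ already coincides with the graded ideal $I_n$.
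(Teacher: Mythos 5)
Your proof is correct and follows essentially the same route as the paper's: both arguments observe that a strictly descending chain of nontrivial $G$-graded left ideals is pairwise nested, hence forms a clique in $Gr_G(R)$, so finiteness of $\omega(Gr_G(R))$ forces the descending chain condition. Your version merely adds the (harmless and worthwhile) bookkeeping that the chain terms are nonzero and proper, so they genuinely lie in $hI^{\ast}(R)$.
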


\begin{proof}
Let $I_1\supseteq I_{2}\supseteq \cdots I_{n} \cdots $ be a descending chain of $G-$graded left ideals. Then $\{I_k\}_{k=1}^{\infty}$ is a clique in $Gr_G(R)$, and hence it is finite.
\end{proof}

\begin{thm}\label{l187}
Let $R$ be a commutative $G-$graded ring. Then \\
(1) $\omega(Gr_G(R))=1$ if and only if $Gr_G(R)= N_1$ or $N_2$\\
(2) If $1<\omega(Gr_G(R))< \infty$ then the number of $G-$graded maximal left ideals of $R$ is finite.
\end{thm}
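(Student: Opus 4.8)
The plan is to dispatch the two parts separately, leaning on the connectivity dichotomy already proved. For part (1), the reverse implication is immediate: both $N_1$ and $N_2$ are edgeless but nonempty, so their largest clique is a single vertex and $\omega(Gr_G(R))=1$. For the forward implication, observe that $\omega(Gr_G(R))=1$ says exactly that no two distinct vertices are adjacent, i.e.\ $Gr_G(R)$ is a null graph with at least one vertex. If it has a single vertex we are done with $N_1$, so assume it has at least two vertices; then it is a null graph on $\geq 2$ vertices, hence disconnected.

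At this point I would invoke Theorem \ref{c11}: since $R$ is commutative and $Gr_G(R)$ is disconnected, $R\cong R_1\times R_2$ with $R_1,R_2$ graded fields. By the lemma describing the $G$-graded ideals of a product $S\times T$ (they are precisely the products $I_s\times I_t$), and because a $G$-graded field has only the trivial graded ideals, the only proper nontrivial graded ideals of $R$ are $R_1\times\{0\}$ and $\{0\}\times R_2$. Thus $hI^{\ast}(R)$ has exactly two elements and $Gr_G(R)=N_2$. A self-contained alternative avoiding Theorem \ref{c11} also works: for any two non-adjacent vertices $I,J$ one checks that $I+J=R$ (otherwise $I+J$ would be a vertex meeting $I$ nontrivially), so $R=I\oplus J$ yields orthogonal idempotents with $I=Re$; a third vertex $K$ must likewise give $R=I\oplus K$, and uniqueness of the multiplicative identity of the ring $I$ forces the complementary idempotent to coincide, whence $K=J$, a contradiction.

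For part (2), the key observation is that $\omega(Gr_G(R))>1$ forces $Gr_G(R)$ to contain an edge, so it is not a null graph, and by Theorem \ref{t1} it is therefore connected. Corollary \ref{c101} then guarantees that any two $G$-graded maximal left ideals intersect nontrivially. Moreover $\omega>1$ forces $hI^{\ast}(R)\neq\emptyset$, so $R$ is not a $G$-graded field and every $G$-graded maximal left ideal is nonzero, hence a genuine vertex of $Gr_G(R)$. Consequently the collection of all $G$-graded maximal left ideals is a set of pairwise-adjacent vertices, i.e.\ a clique, so its cardinality is at most $\omega(Gr_G(R))<\infty$; therefore there are only finitely many of them.

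I expect the main obstacle to lie in part (1), namely upgrading ``disconnected null graph'' to ``exactly two vertices''; this is precisely where Theorem \ref{c11} together with the lemma on graded ideals of a direct product (or, alternatively, the idempotent argument) carries the weight. For part (2) the only delicate points are extracting connectivity from $\omega>1$ via the null-graph dichotomy and verifying that the maximal ideals are honest vertices; once those are in place, bounding their number by the clique number is immediate.
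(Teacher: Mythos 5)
Your proposal is correct and follows essentially the same route as the paper: part (1) reduces to the disconnectedness dichotomy and Theorem \ref{c11} to get $R\cong R_1\times R_2$ with $R_1,R_2$ graded fields (hence $N_2$), and part (2) uses connectivity together with Corollary \ref{c101} to see that the $G$-graded maximal left ideals form a clique bounded by $\omega(Gr_G(R))$. You supply slightly more detail than the paper (e.g.\ checking that the maximal ideals are genuine vertices, and the optional idempotent argument), but the underlying argument is the same.
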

\begin{proof}
(1) Suppose $\omega(Gr_G(R))=1$.  Assume $|Gr_G(R)|\geq 2$. Then $Gr_G(R)$ is disconnected. So, by Corollary \ref{c11},  $R$ is a direct product of two $G-$graded fields, consequently $Gr_G(R)=N_2$. The converse is clear.

(2) Suppose $1<\omega(Gr_G(R))< \infty$. So $Gr_G(R)$ is connected. Then, by Corollary \ref{c101}, the set of $G-$graded maximal left ideals of $R$ forms a clique, and hence it is finite.
\end{proof}

\begin{thm}\label{t3}
If $R$ is a $G-$graded ring then $gr(Gr_G(R))=\{3,\infty\}$
\end{thm}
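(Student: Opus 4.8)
The plan is to prove the dichotomy in its contrapositive form: if $Gr_G(R)$ contains a cycle then it already contains a triangle, so that the shortest cycle always has length $3$, while in the absence of cycles $g(Gr_G(R))=\infty$. Throughout I would use repeatedly that, by Lemma \ref{B}, the intersection of two vertices is again a graded left ideal, and that whenever $I,J\in hI^{\ast}(R)$ are adjacent the ideal $K=I\cap J$ is nonzero and satisfies $K\subseteq I\subsetneq R$, so $K$ is itself a vertex of $Gr_G(R)$.

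Assume $Gr_G(R)$ is not acyclic and let $C\colon I_1-I_2-\cdots-I_n-I_1$ be a shortest cycle, of length $n=g(Gr_G(R))\geq 3$. If $n=3$ there is nothing to prove, so I would argue by contradiction and suppose $n\geq 4$. The first step is to show that consecutive vertices along $C$ are comparable under inclusion. Indeed, fix an edge $I_i-I_{i+1}$ and set $K=I_i\cap I_{i+1}$. If $I_i$ and $I_{i+1}$ were incomparable then $K$ would be a vertex distinct from both $I_i$ and $I_{i+1}$, and since $K\subseteq I_i$ and $K\subseteq I_{i+1}$ it would be adjacent to each of them; thus $I_i-I_{i+1}-K-I_i$ is a $3$-cycle, contradicting that $C$ is a shortest cycle of length $\geq 4$. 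Hence $I_i\subseteq I_{i+1}$ or $I_{i+1}\subseteq I_i$ for every edge of $C$.

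The second step extracts a triangle from this nested structure. Among the finitely many vertices $I_1,\dots,I_n$ of $C$ choose one, say $I_\ell$, that is minimal with respect to inclusion. Its two neighbours $A=I_{\ell-1}$ and $B=I_{\ell+1}$ are each comparable to $I_\ell$ by the first step, and minimality of $I_\ell$ forces $I_\ell\subsetneq A$ and $I_\ell\subsetneq B$. Consequently $A\cap B\supseteq I_\ell\neq\{0\}$, so $A$ and $B$ are adjacent. Since $n\geq 4$, the vertices $A$, $I_\ell$ and $B$ are pairwise distinct, and $A-I_\ell-B-A$ is a triangle, again contradicting $n\geq 4$. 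Therefore $n=3$, which shows that the presence of any cycle forces $g(Gr_G(R))=3$; if no cycle exists then $g(Gr_G(R))=\infty$, and together these give $g(Gr_G(R))\in\{3,\infty\}$.

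The routine parts---that $K$ is genuinely a vertex, and that the closed walks exhibited are honest cycles on distinct vertices---are immediate from the definitions and from $n\geq 4$. The only genuinely delicate point, and the one I would be most careful about, is the \emph{nested} case: when an edge joins two comparable ideals the naive ``take the intersection'' trick produces no new vertex, which is precisely why the first step is needed to isolate this case and why the minimality choice in the second step is needed to resolve it by passing to the two neighbours of a smallest vertex.
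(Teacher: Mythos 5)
Your proof is correct, and it rests on exactly the same trichotomy as the paper's: for an edge $I-J$, either $I\cap J$ is a vertex distinct from both (giving a triangle at once) or the two ideals are comparable, in which case a cycle-neighbour of the smaller one is forced to be adjacent to the larger one. The paper extracts the triangle directly from a single edge of an arbitrary cycle, so your extra scaffolding (shortest cycle, comparability along every edge, choice of a minimal vertex) is not needed, but it is sound.
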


\begin{proof}
Assume $gr(Gr_G(R))$ is finite and let $I_0-I_1-\cdots -I_n$ be a cycle. If $I_0\cap I_1=I_0$ then $I_n-I_0-I_1$ is a $3-$cycle. Similarly, if $I_0\cap I_1=I_1$ then $I_0-I_1-I_2$ is a $3-$cycle. The remaining case is that $I_0\cap I_1\neq I_0$ or $I_1$. In this case we obtain the $3-$cycle $I_0-I_1-(I_0\cap I_1)$. Hence $gr(Gr_G(R))=3.$
\end{proof}

In the next theorem we characterize $G-$graded rings $R$ such that $g(Gr_G(R))=\infty$. In fact, this result can be refer to as the graded version of \cite[Theorem 17]{ak13}.
\begin{thm}\label{t4}
Let $R$ be a $G-$graded ring such that $Gr_G(R)$ is not a null graph. If $gr(Gr_G(R))=\infty$ then  $R$ is a $G-$graded local ring and $Gr_G(R)$ is a star whose center is the unique $G-$graded maximal left ideal of $R$, say $M$. Moreover, one of the followings hold:\\
(1) $M$ is principal. In this case $Gr_G(R)=K_1$ or $K_2$.\\
(2) The minimal generating set of homogeneous elements of $M$ has size $2$. In this case $M^2=\{0\}$.
\end{thm}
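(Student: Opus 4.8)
The plan is to exploit that a girth-$\infty$ intersection graph has no $3$-cycles, which is an extremely rigid condition, and to peel off the conclusions in the order: tree $\Rightarrow$ graded local $\Rightarrow$ star $\Rightarrow$ dichotomy on the center. First I would record the shape of the graph. By Theorem \ref{t3} the girth lies in $\{3,\infty\}$, so $gr(Gr_G(R))=\infty$ means $Gr_G(R)$ is acyclic; since it is not null, Theorem \ref{t1} forces it to be connected, so $Gr_G(R)$ is a tree. The key local consequence is a \emph{comparability principle}: if $I-J$ is any edge then $I\subseteq J$ or $J\subseteq I$, for otherwise $I\cap J$ (a graded ideal by Lemma \ref{B}) is a vertex distinct from $I$ and $J$ and adjacent to each, producing the triangle $I-J-(I\cap J)$. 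Thus every edge of $Gr_G(R)$ is a strict inclusion.

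Next I would prove $R$ is graded local. Suppose $M_1\neq M_2$ are graded maximal left ideals. If $M_1\cap M_2\neq\{0\}$ they are adjacent and incomparable, giving a forbidden triangle with $M_1\cap M_2$; hence $M_1\cap M_2=\{0\}$ and they are non-adjacent. As the tree is connected with $diam(Gr_G(R))\le 2$ by Theorem \ref{t2}, some vertex $K$ is adjacent to both; comparability together with maximality forces $K\subsetneq M_1$ and $K\subsetneq M_2$, whence $\{0\}\neq K\subseteq M_1\cap M_2=\{0\}$, a contradiction. So $R$ has a unique graded maximal left ideal $M$. Since $M$ contains every vertex it is adjacent to all of them, and no two vertices other than $M$ can be adjacent (that would close a triangle through $M$); therefore $Gr_G(R)$ is a star with center $M$. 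The same no-triangle argument shows each leaf is a graded \emph{minimal} left ideal, and that for distinct leaves $I,J$ one has $I\cap J=\{0\}$ and $I+J=M$ (if $I+J\subsetneq M$ it would be a leaf adjacent to $I$), i.e. $M=I\oplus J$.

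Finally I would extract the dichotomy. If there is at most one leaf then $M$ must be principal: choosing a homogeneous $a\in M$ not in the (at most one) leaf, $Ra$ is a graded left ideal that is neither $\{0\}$ nor a leaf, hence equals $M$. Contrapositively, if $M$ is not principal there are at least two leaves $I=Ra$, $J=Rb$ (each leaf is principal, being minimal), so $M=I+J=Ra+Rb$ is generated by two homogeneous elements, and minimality gives size exactly $2$. For such $M$ I would compute $M^2=M(I\oplus J)=MI+MJ$, note $MI$ is a graded left ideal inside the minimal ideal $I$, so $MI\in\{\{0\},I\}$, and exclude $MI=I$ by graded Nakayama ($M$ is the graded Jacobson radical of the graded-local ring $R$ and $I$ is finitely generated); hence $MI=MJ=\{0\}$ and $M^2=\{0\}$. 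It remains to rule out two distinct leaves when $M=Rm$ is principal: in the decomposition $m=m_I+m_J$ with $m_I\in I$, $m_J\in J$ homogeneous of degree $\deg m$, one writes $m_I=cm$ and $m_J=c'm$ with $c,c'\in R_e$, so $(1-c-c')m=0$; here $c,c'\in M$ (a homogeneous unit $c$ would give $m\in I$ and $M=Rm\subseteq I\subsetneq M$), whence $1-c-c'$ is a homogeneous element outside $M$, i.e. a unit, forcing $m=0$, a contradiction. Thus $M$ principal yields at most one leaf and $Gr_G(R)=K_1$ or $K_2$.

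The main obstacle is the generator analysis of the last paragraph in the possibly non-commutative graded setting. The facts that a homogeneous element is either in $M$ or a unit, and that $1-x$ is a unit for homogeneous $x\in M\cap R_e$, replace the commutative statement that non-units lie in the maximal ideal; I would prove them directly from graded-locality (namely $Rx=R$ for homogeneous $x\notin M$, and a graded-maximal-ideal argument giving $1-x\notin M$), and I would invoke the graded form of Nakayama's lemma rather than its ungraded version. Everything else is the rigid no-triangle bookkeeping forced by $gr(Gr_G(R))=\infty$.
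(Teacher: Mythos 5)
Your proof is correct, and its first half (no triangles $\Rightarrow$ adjacent vertices are comparable $\Rightarrow$ unique graded maximal left ideal $\Rightarrow$ star with center $M$, all leaves graded minimal) is in substance the paper's argument; in fact your graded-locality step is cleaner, since the paper's claim that $I-M_1-M_2$ is a $3$-cycle is garbled as written ($M_1$ and $M_2$ are not adjacent at that point), whereas your route through a common neighbour $K$ with $\{0\}\neq K\subseteq M_1\cap M_2=\{0\}$ is airtight. Where you genuinely diverge is the final dichotomy. The paper first deduces that $R$ is graded Artinian from the finite clique number (Lemma \ref{l18}), cites graded Artinian $\Rightarrow$ graded Noetherian to get a finite homogeneous generating set for $M$, bounds its size by $2$ with a triangle argument, and in the principal case runs an element computation ($x=ya$ with $y$ a unit or in $M$, descending through $Ra^2$ and $Ra^3=0$). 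You instead read everything off the star: distinct leaves $I,J$ satisfy $I\cap J=\{0\}$ and $I+J=M$, each leaf is principal by graded minimality, so a non-principal $M$ equals $Ra\oplus Rb$ and $M^2=MI+MJ=\{0\}$ by graded Nakayama, while a principal $M=Rm$ excludes two leaves via the unit $1-c-c'$. This buys independence from the Artinian/Noetherian citation and a much shorter Case 1; the ingredients you must supply (every proper graded left ideal lies in a graded maximal one, homogeneous elements outside $M$ are left invertible, the graded Nakayama lemma) are standard, and you flag them explicitly. Both arguments converge at the end: your $MI=MJ=\{0\}$ is the same Nakayama-type step the paper performs as $(Ra)^2=RaRb=RbRa=(Rb)^2=0$.
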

\begin{proof}
Suppose $M_1$ and $M_2$ are two distinct $G-$graded maximal left ideals of $R$. Then by Theorem \ref{t2}, $d(M_1,M_2)\leq 2$. If $M_1 \cap M_2\neq\{0\}$ then $M_1-(M_1\cap M_2 )-M_2$ is a $3-$cycle, a contradiction. Suppose $M_1 \cap M_2=\{0\}$. Then by Theorem \ref{t2}, there exists a $G-$graded left ideal $I$ that is adjacent to both $M_1$ and $M_2$. Since $M_1 \cap M_2=\{0\}$, $I\not\subseteq M_1$ and $I\not\subseteq M_2$. So $I-M_1-M_2$ is a $3-$cycle in $Gr_G(R)$, a contradiction. Hence $R$ has a unique $G-$graded maximal ideal, and hence it is $gr-$local ring. Let $M$ be the $G-$graded maximal left ideal and suppose $M\cap J=\{0\}$ for some $J\in hI^{\ast}(R)$. Then $M\subsetneq M+J$, and hence $M+J=R$. So $M$ is $G-$graded maximal as well as $G-$graded minimal, which implies $Gr_G(R)$ is null graph, a contradiction. So $M\cap J\neq\{0\}$ for all $J\in hI^{\ast}(R)$. Moreover, since $Gr_G(R)$ has no cycles then $J\subseteq M$ for all $J\in hI^{\ast}(R)$. So fare we proved that $Gr_G(R)$ is a star whose center is $M$. Now we proceed to prove parts (1) and (2).
Since $R$ is $G-$graded Artinian, by  \cite[Corollary 2.9.7]{Nastasescue} $R$ is $G-$graded Notherian. So $M$ is generated by a finite set of homogeneous elements. If a minimal set of homogeneous generators has at least three elements, containing say $a,b,c,...$, then $M-(Ra+Rb)-(Rb+Rc)$ is a $3-$cycle in $Gr_G(R)$, a contradiction. So a minimal set of homogeneous generators of $M$ has at most two elements. Moreover, since $M$ is finitely generated and $J^g(R)=M$ (where $J^g(R)$ the graded Jacobson radical of $R$), by \cite[Corollary 2.9.2]{Nastasescue} $M\supsetneq M^2 \supsetneq M^3 \supsetneq \cdots $. In addition, since $Gr_G(R)$ has no $3-$cycles, we get $M^3=0$.\\
Case 1: Suppose $M=Ra$ for some $a\in h(R)$. Let $I\in hI^{\ast}(R)$ and let $x\in I\cap h(R)$. Then $x=ya$ for some $y\in R$. Since $x,a \in h(R)$, it results that $y\in h(R)$. If $y\notin M$, $Ry=R$, because $M$ is the only $G-$graded maximal left ideal. So $y$ is a unit, and hence $I=M$. Assume $y\in M$. Then, we get $x=wa^2$ for some $w\in h(R)$. Similarly, if $w\notin M$, then $I=Ra^2$, otherwise $I=Ra^3$, a contradiction. Therefore we have that if $Ra^2=0$ then $Gr_G(R)=K_1$, otherwise $Gr_G(R)=K_2$.\\
Case2: Assume the minimal set of homogeneous generators  of $M$ has two elements say $a,b$ i.e $M=Ra+Rb$. Since $Gr_G(R)$ has no $3-$cycles,  $Ra$ and $Rb$ are $G-$graded minimal. Moreover, we have $Ra$ and $Rb$ are left subideals of $J^g(R)$. By \cite[Corollary 2.9.2]{Nastasescue} it results that $(Ra)^2=RaRb=RbRa=(Rb)^2=0$, and hence $M^2=0$.
\end{proof}

\section{Intersections graph of types of gradings}\label{sec4}
Note that if $I_e$ is left ideal of $R_e$ then $RI_e$ is a $G-$graded left ideal of $R$. Moreover, $RI_e\cap R_e=I_e$.


\begin{thm}\label{t100}
Let $R$ be a $G-$graded ring such that $R_e$ contains at least two proper left ideals. If $G(R_e)$ is connected then $Gr_G(R)$ is connected, and hence $G(R)$ is connected.
\end{thm}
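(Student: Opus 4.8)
The plan is to reduce the claim to the existence of a \emph{single} edge in $Gr_G(R)$ and then invoke Theorem \ref{t1}. Indeed, by Theorem \ref{t1} the graph $Gr_G(R)$ is disconnected precisely when it is a null graph $N_n$ with $n\geq 2$; equivalently, as soon as $Gr_G(R)$ possesses at least one edge it must be connected. So the whole problem becomes: produce two distinct, adjacent vertices of $Gr_G(R)$. The natural device is the assignment $I_e\mapsto RI_e$ recorded just before the statement, which sends a left ideal of $R_e$ to a $G$-graded left ideal of $R$ and satisfies $RI_e\cap R_e=I_e$. I expect this single identity to do almost all of the work, since it lets me pull information about $R_e$ up to $R$.

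First I would extract an edge in $G(R_e)$. The hypothesis that $R_e$ has at least two proper (nonzero) left ideals guarantees that $G(R_e)$ has at least two vertices, and since $G(R_e)$ is connected it therefore contains at least one edge; a connected graph on $\geq 2$ vertices cannot be edgeless. Thus there are two \emph{distinct} nontrivial left ideals $A,B$ of $R_e$ with $A\cap B\neq\{0\}$. Next I would lift them: set $\widetilde A=RA$ and $\widetilde B=RB$, which are $G$-graded left ideals of $R$ by the cited note. I must then check these are genuine, distinct vertices of $Gr_G(R)$. Nonzeroness is clear because $1\in R_e$ gives $A\subseteq RA$, so $RA\neq\{0\}$; properness follows from $RA\cap R_e=A\subsetneq R_e$, which forces $RA\neq R$; and the same applies to $RB$. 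Distinctness is immediate from the identity as well: if $RA=RB$ then $A=RA\cap R_e=RB\cap R_e=B$, contrary to $A\neq B$. Finally, picking $0\neq x\in A\cap B$ and using $A\subseteq RA$ and $B\subseteq RB$ yields $0\neq x\in RA\cap RB$, so $\widetilde A$ and $\widetilde B$ are adjacent.

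Having produced an edge, I would conclude that $Gr_G(R)$ is not a null graph, hence by Theorem \ref{t1} it is connected. The final clause, that $G(R)$ is connected, then follows from Remark \ref{c10}: the same pair $\widetilde A,\widetilde B$ is also an edge of $G(R)$ (adjacency means nonzero intersection in both graphs), so $G(R)$ is not null and is connected by Theorem \ref{t00}. The step I expect to require the most care is not any computation but the bookkeeping that $RA$ and $RB$ are \emph{proper} and \emph{distinct} vertices; all of this is governed by the identity $RI_e\cap R_e=I_e$, so the real content is recognizing that this one identity simultaneously delivers nontriviality, properness, distinctness, and preservation of adjacency. One could equivalently observe that $I_e\mapsto RI_e$ is an injective, adjacency-preserving map, i.e.\ a graph embedding of $G(R_e)$ into $Gr_G(R)$, and transport an entire connecting path rather than a single edge; but Theorem \ref{t1} makes the single-edge reduction the shortest route.
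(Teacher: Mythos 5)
Your proposal is correct and follows essentially the same route as the paper: extract an edge $A\cap B\neq\{0\}$ from the connected graph $G(R_e)$ (using the two-vertex hypothesis to guarantee an edge exists), lift it via $I_e\mapsto RI_e$ using the identity $RI_e\cap R_e=I_e$ to get distinct adjacent vertices of $Gr_G(R)$, and conclude via Theorem \ref{t1} that a non-null intersection graph is connected. Your write-up is somewhat more careful than the paper's in verifying that $RA$ and $RB$ are nonzero and proper, but the argument is the same.
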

\begin{proof}
Since $G(R_e)$ is connected then it must contain an edge. Let $I_e$ , $J_e$ be two adjacent vertices of $G(R_e)$. Then $RI_e$ and $RJ_e$ are vertices in $Gr_G(R)$. Moreover $RI_e\cap R_e=I_e$ and $RJ_e\cap R_e=J_e$, and so $RI_e\neq RJ_e$. Additionally, we have $\{0\}\neq I_e\cap J_e\subseteq RI_e\cap RJ_e$. Therefore $Gr_G(R)$ is not null, and hence it is connected.
\end{proof}

\begin{rem} The converse of Theorem \ref{t100} need not to be true. Indeed, choose $R_e$ to be any ring that has two minimal left ideals $I$ and $J$ (for instance the ring in \cite[Example 2.6]{ch09}).  The ring of polynomials, $R=R_e[x]$ is $\mathbb{Z}-$graded by the grading $R_k=R_ex^k$, $k\geq0$ and $R_k=0$, $k<0$. The ideals $Rx$ and $Rx^2$ are adjacent in $Gr_G(R)$ and so $Gr_G(R)$ is connected, while $G(R_e)$ is disconnected. Later we show that the converse is true whenever the grading is left $e-$faithful.
\end{rem}

A grading $(R,G)$ is called left $\sigma-$faithful for some $\sigma \in G$, if $R_{\sigma\tau^{-1}}x_{\tau}\neq\{0\}$ for every $\tau\in G$, and every nonzero $x_{\tau}\in R_{\tau}$. If $(R,G)$ is left $\sigma-$faithful for all $\sigma\in G$ then it is called left faithful.

\begin{lem}\label{51}
A grading $(R,G)$ is left $\sigma-$faithful for some $\sigma \in G$ if and only if $I\cap R_{\sigma}\neq\{0\}$ for all $I\in hI^{\ast}(R)$
\end{lem}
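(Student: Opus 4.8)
The plan is to move freely between a graded left ideal and its homogeneous components, relying on two elementary book-keeping facts. First, for any graded left ideal $I$ one has $I\cap R_\sigma=I_\sigma$, the degree-$\sigma$ homogeneous component of $I$, by the very definition of a graded ideal. Second, for a nonzero homogeneous $x_\tau\in R_\tau$ the principal left ideal $Rx_\tau$ is graded, with $Rx_\tau=\bigoplus_\mu R_\mu x_\tau$ and $R_\mu x_\tau\subseteq R_{\mu\tau}$; since the degrees $\mu\tau$ are distinct, its degree-$\sigma$ component is exactly $R_{\sigma\tau^{-1}}x_\tau$, so $Rx_\tau\cap R_\sigma=R_{\sigma\tau^{-1}}x_\tau$.

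For the forward implication I would assume $(R,G)$ is left $\sigma$-faithful and take any $I\in hI^{\ast}(R)$. Being a nonzero graded left ideal, $I$ has a nonzero homogeneous component $0\neq x_\tau\in I\cap R_\tau$. Left $\sigma$-faithfulness gives $R_{\sigma\tau^{-1}}x_\tau\neq\{0\}$, and since $R_{\sigma\tau^{-1}}x_\tau\subseteq R_{\sigma\tau^{-1}}R_\tau\subseteq R_\sigma$ while also $R_{\sigma\tau^{-1}}x_\tau\subseteq Rx_\tau\subseteq I$, I conclude $\{0\}\neq R_{\sigma\tau^{-1}}x_\tau\subseteq I\cap R_\sigma$, so $I\cap R_\sigma\neq\{0\}$.

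For the converse I would assume $I\cap R_\sigma\neq\{0\}$ for all $I\in hI^{\ast}(R)$ and verify faithfulness: fix $\tau\in G$ and a nonzero $x_\tau\in R_\tau$, and set $I=Rx_\tau$. If $I$ is proper then $I\in hI^{\ast}(R)$, and the hypothesis together with the second book-keeping fact gives $R_{\sigma\tau^{-1}}x_\tau=I\cap R_\sigma\neq\{0\}$, as wanted. The main obstacle is precisely the complementary case $Rx_\tau=R$, where the hypothesis---phrased only for proper ideals---gives no direct grip. Here $1\in Rx_\tau$, and comparing degree-$e$ components produces a homogeneous $r\in R_{\tau^{-1}}$ with $rx_\tau=1$; then for every $s\in R_\sigma$ we have $s=(sr)x_\tau$ with $sr\in R_\sigma R_{\tau^{-1}}\subseteq R_{\sigma\tau^{-1}}$, so that $R_\sigma\subseteq R_{\sigma\tau^{-1}}x_\tau\subseteq R_\sigma$ and hence $R_{\sigma\tau^{-1}}x_\tau=R_\sigma$. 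Thus it only remains to see $R_\sigma\neq\{0\}$, which I would extract from the hypothesis applied to any fixed member of $hI^{\ast}(R)$ (the natural standing assumption, since otherwise $Gr_G(R)$ has no vertices at all). The two implications together yield the stated equivalence.
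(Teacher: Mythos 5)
Your proof is correct and follows essentially the same route as the paper: the forward direction is the paper's argument almost verbatim, and the converse rests on the same bookkeeping observation that the degree-$\sigma$ component of $Rx_{\tau}$ is exactly $R_{\sigma\tau^{-1}}x_{\tau}$. The one place you diverge is an improvement rather than a detour: the paper simply asserts $Rx_{\tau}\in hI^{\ast}(R)$, which fails when $Rx_{\tau}=R$, whereas you treat that case separately and correctly note that the resulting identity $R_{\sigma\tau^{-1}}x_{\tau}=R_{\sigma}$ still requires $R_{\sigma}\neq\{0\}$, i.e.\ the standing assumption that $hI^{\ast}(R)$ is nonempty (without which the converse is actually false, e.g.\ for a trivially graded field).
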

\begin{proof}
Suppose $(R,G)$ is left $\sigma-$faithful for some $\sigma \in G$. Let $I\in hI^{\ast}(R)$ and take a nonzero element $x_{\tau}\in I\cap R_{\tau}$ for some $\tau\in G$. Then $R_{\sigma\tau^{-1}}x_{\tau}\neq\{0\}$. So we have $\{0\}\neq R_{\sigma\tau^{-1}}x_{\tau}\subseteq R_{\sigma\tau^{-1}}R_{\tau}\subseteq R_{\sigma\tau^{-1}\tau}=R_{\sigma}$. On the other hand $R_{\sigma\tau^{-1}}x_{\tau}\subseteq I$. Therefore $I\cap R_{\sigma}\neq\{0\}$. Conversely, assume $I\cap R_{\sigma}\neq\{0\}$ for all $I\in hI^{\ast}(R)$. If $x_{\tau}$ is a nonzero homogenous element of degree $\tau$, for some  $\tau \in G$, then $Rx_{\tau}\in hI^{\ast}(R)$. So by assumption, $Rx_{\tau}\cap R_{\sigma}\neq\{0\}$. Since $R_{\rho}x_{\tau}\subseteq R_{\rho\tau}$ for each $\rho \in G$, we get $R_{\rho}x_{\tau}\cap R_{\sigma}=\{0\}$ for all $\rho \in G\setminus\{\sigma\tau^{-1}\}$. This implies that $R_{\sigma\tau^{-1}}x_{\tau}\cap R_{\sigma}\neq\{0\}$, consequently $R_{\sigma\tau^{-1}}x_{\tau}\neq\{0\}$. Therefore $(R,G)$ is left $\sigma-$faithful.
\end{proof}

Let $(R,G)$ be left $e-$faithful grading. By Lemma \ref{51} we have $I\cap R_e\neq\{0\}$ for all $I\in hI^{\ast}(R)$. Define a relation $\sim$ on the vertices of $Gr_G(R)$ by  $I\sim J$ if and only if $I\cap R_e=J\cap R_e$. Clearly $\sim$ is an equivalence relation on $hI^{\ast}(R)$. The classes of $\sim $ are $\{[RI_e]\mid I_e\in I^{\ast}(R_e)\}$. These classes satisfy the followings.\\
(1) For each $I_e\in I^{\ast}(R_e)$, $[RI_e]$ is a clique in $Gr_G(R)$.\\
(2) If $K\in [RI_e]$ and $L\in [RJ_e]$ then $K\cap L\neq\{0\}$ if and only if $I_e\cap J_e\neq\{0\}$. To see this, note that by Lemma \ref{51} $K\cap L\neq{0}$ if and only if $K\cap L\cap R_e\neq\{0\}$. Since $K\cap R_e=I_e$ and $L\cap R_e\neq\{0\}$ we get $K\cap L\neq\{0\}$ if and only if $I_e\cap J_e\neq\{0\}$.

Define a graph $Gr_e(R)$ on the classes of $\sim$ where $[K]$ and $[L]$ are adjacent if and only if $K\cap L\neq\{0\}$. This adjacency operation is well defined by (2) above. In fact $Gr_e(R)$ is the quotient graph of $Gr_G(R)$ over the classes of $\sim$.

\begin{thm}\label{t1001}
Let $(R,G)$ be left $e-$faithful grading. Then the map $\phi:G(R_e)\longrightarrow Gr_e(R)$ defined by $\phi(I_e)=[RI_e]$ is a graph  isomorphism.
\end{thm}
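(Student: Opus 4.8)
The plan is to check the three conditions defining a graph isomorphism: that $\phi$ is a well-defined map on vertex sets, that it is a bijection, and that it both preserves and reflects adjacency. Since the substantive content has already been packaged into Lemma \ref{51} and the two properties (1) and (2) recorded just above the statement, the argument should be short.

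First I would verify that $\phi$ lands in the vertex set of $Gr_e(R)$, i.e.\ that $RI_e \in hI^{\ast}(R)$ for every $I_e \in I^{\ast}(R_e)$. Using the identity $RI_e \cap R_e = I_e$ noted before Theorem \ref{t100}, the hypothesis $I_e \neq \{0\}$ forces $RI_e \neq \{0\}$, while $I_e \neq R_e$ forces $RI_e \neq R$ (otherwise $RI_e \cap R_e = R_e$, a contradiction). Hence $[RI_e]$ is a genuine vertex and $\phi$ is well defined.

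Next I would establish bijectivity. Injectivity is immediate: $\phi(I_e) = \phi(J_e)$ means $RI_e \sim RJ_e$, so $I_e = RI_e \cap R_e = RJ_e \cap R_e = J_e$. For surjectivity, take any vertex $[K]$ of $Gr_e(R)$ with $K \in hI^{\ast}(R)$; left $e$-faithfulness via Lemma \ref{51} gives $K \cap R_e \neq \{0\}$, and $K \subsetneq R$ gives $K \cap R_e \neq R_e$, so $I_e := K \cap R_e \in I^{\ast}(R_e)$. Then $K \cap R_e = I_e = RI_e \cap R_e$ shows $K \sim RI_e$, whence $[K] = [RI_e] = \phi(I_e)$. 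This is precisely the assertion that the $\sim$-classes are indexed by $I^{\ast}(R_e)$.

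Finally, for adjacency: $I_e$ and $J_e$ are adjacent in $G(R_e)$ iff $I_e \cap J_e \neq \{0\}$, while $\phi(I_e) = [RI_e]$ and $\phi(J_e) = [RJ_e]$ are adjacent in $Gr_e(R)$ iff $RI_e \cap RJ_e \neq \{0\}$; property (2) applied with $K = RI_e$ and $L = RJ_e$ says exactly that these two conditions are equivalent, so $\phi$ preserves and reflects edges. I do not anticipate a serious obstacle here: the only points requiring genuine care are that surjectivity (equivalently, that the $\sim$-classes are parametrized by $I^{\ast}(R_e)$) leans essentially on the $e$-faithfulness hypothesis through Lemma \ref{51}, and that injectivity is what guarantees $\phi$ sends distinct vertices to distinct vertices, so that no edge is collapsed to a loop in the simple graph $Gr_e(R)$.
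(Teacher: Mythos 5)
Your proof is correct and follows essentially the same route as the paper's: bijectivity via the identity $RI_e\cap R_e=I_e$ and adjacency via property (2) preceding the theorem. You merely spell out the well-definedness and surjectivity checks that the paper absorbs into its preliminary remark that the classes of $\sim$ are exactly $\{[RI_e]\mid I_e\in I^{\ast}(R_e)\}$, which is a reasonable amount of extra care.
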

\begin{proof}
Let $I_e,J_e\in I^{\ast}(R_e)$.  Since $I_e=RI_e\cap R_e$ and $J_e=RJ_e\cap R_e$, it follows that $I_e\neq J_e$ if and only if $[RI_e]\neq [RJ_e]$. Hence $\phi$ is a set bijection. Additionally from (2) above we have $I_e\cap J_e\neq\{0\}$ if and only if $RI_e\cap RJ_e\neq \{0\}$. Therefore $\phi$ is a graph isomorphism.
\end{proof}

\begin{thm}
Let  $(R,G)$ be left $e-$faithful. Then $G(R_e)$ is connected if and only if $Gr_G(R)$ is connected.
\end{thm}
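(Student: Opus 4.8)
The plan is to route the whole argument through the graph isomorphism of Theorem \ref{t1001}. That result identifies the quotient graph $Gr_e(R)$---obtained from $Gr_G(R)$ by collapsing each equivalence class of $\sim$ to a single vertex---with $G(R_e)$. Since connectivity is invariant under graph isomorphism, $G(R_e)$ is connected if and only if $Gr_e(R)$ is connected, and the theorem reduces to showing that $Gr_G(R)$ is connected if and only if its quotient $Gr_e(R)$ is connected. The two structural facts recorded just before Theorem \ref{t1001} are what drive this: each class $[RI_e]$ is a clique in $Gr_G(R)$, and for $K\in[RI_e]$, $L\in[RJ_e]$ one has $K\cap L\neq\{0\}$ if and only if $I_e\cap J_e\neq\{0\}$. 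The second fact says precisely that two distinct classes are adjacent in $Gr_e(R)$ if and only if \emph{every} pair of representatives, one from each class, is adjacent in $Gr_G(R)$.

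First I would prove that connectivity descends to the quotient. Given any path $K_0-K_1-\cdots-K_m$ in $Gr_G(R)$, each consecutive pair $K_i,K_{i+1}$ lies either in the same class of $\sim$, in which case $[K_i]=[K_{i+1}]$, or in two distinct classes, which are then adjacent in $Gr_e(R)$ by definition of its edge set. Suppressing repeated terms, the sequence $[K_0],[K_1],\dots,[K_m]$ is thus a walk in $Gr_e(R)$ from $[K_0]$ to $[K_m]$. As every vertex of $Gr_e(R)$ is the image of a vertex of $Gr_G(R)$, this shows that $Gr_G(R)$ connected implies $Gr_e(R)$ connected, i.e. that $Gr_G(R)$ connected implies $G(R_e)$ connected.

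For the reverse implication I would lift paths. Assume $Gr_e(R)$ is connected and take arbitrary $K,L\in hI^{\ast}(R)$ with $K\in[RI_e]$ and $L\in[RJ_e]$. Choose a path $[RI_e]=[C_0]-[C_1]-\cdots-[C_n]=[RJ_e]$ in $Gr_e(R)$ and pick representatives $D_i\in[C_i]$ with $D_0=K$ and $D_n=L$. By the second structural fact, adjacency of the consecutive classes $[C_i]$ and $[C_{i+1}]$ forces $D_i\cap D_{i+1}\neq\{0\}$, so $D_i-D_{i+1}$ is an edge of $Gr_G(R)$; hence $K=D_0-D_1-\cdots-D_n=L$ is a path in $Gr_G(R)$, and $Gr_G(R)$ is connected. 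This direction recovers Theorem \ref{t100} (in the case where $R_e$ has at least two proper left ideals) as a special consequence.

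I expect the only real content to be the lifting step, whose cleanliness rests on the second structural fact: it upgrades adjacency of classes to adjacency of all pairs of representatives, so that no auxiliary movement inside the cliques is needed. Were one to know only that \emph{some} pair of representatives is adjacent, the clique property (first structural fact) would still let one patch the path together inside each class, so the argument is robust in any case. The degenerate situations---when $hI^{\ast}(R)$ has at most one vertex, or $R_e$ has fewer than two proper left ideals---cause no trouble, since they are absorbed by the isomorphism under the convention that a graph with at most one vertex is connected.
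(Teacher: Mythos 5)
Your proof is correct, but it takes a genuinely different route from the paper's. The paper handles the two directions with two separate pieces of prior machinery: the implication ($G(R_e)$ connected $\Rightarrow$ $Gr_G(R)$ connected) is delegated to Theorem \ref{t100}, which really rests on the dichotomy of Theorem \ref{t1} (an intersection graph of graded ideals with at least one edge is connected); the reverse implication is proved by taking distinct $I_e,J_e\in I^{\ast}(R_e)$ with $RI_e\cap RJ_e=\{0\}$, invoking the diameter bound of Theorem \ref{t2} to get a common neighbour $K$ of $RI_e$ and $RJ_e$ in $Gr_G(R)$, and cutting down to $R_e$ via left $e$-faithfulness to produce the path $I_e-(K\cap R_e)-J_e$ in $G(R_e)$. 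You instead run everything through the quotient graph $Gr_e(R)$ of Theorem \ref{t1001}: connectivity descends to a quotient whose fibres are cliques, and lifts back because adjacency of two classes forces adjacency of \emph{every} pair of representatives. This is purely graph-theoretic, needs neither Theorem \ref{t2} nor Theorem \ref{t1}, and in fact yields slightly more, namely that $Gr_G(R)$ and $G(R_e)$ have the same number of connected components. The one point to tighten: in the lifting step, if $K$ and $L$ lie in the same class, the chosen path in $Gr_e(R)$ is trivial and produces no edge between them, so you must invoke the clique property of the classes explicitly there rather than mentioning it only as a fallback; with that sentence added, the argument is complete.
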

\begin{proof}
The ``if'' part is  Theorem \ref{t100}. For the ``only if'' part, assume $Gr_G(R)$ is connected and let $I_e, J_e$ be two distinct vertices in $G(R_e)$. If $RI_e \cap RJ_e\neq\{0\}$, then by Theorem \ref{t1001} $I_e\cap J_e\neq\{0\}$, and hence $I_e-J_e$ is a path. Assume $RI_e \cap RJ_e=\{0\}$. By Theorem  \ref{t2}, there is $K\in hI^{\ast}(R)$ such that $RI_e\cap K\neq\{0\}$ and $RI_e\cap K\neq\{0\}$. Then $RI_e\cap K\cap R_e\neq\{0\}$ and $RI_e\cap K\cap R_e\neq\{0\}$, consequently $I_e\cap (K\cap R_e)$ and $J_e\cap (K\cap R_e)$ are nontrivial. Hence we obtain a path connecting $I_e$ and $J_e$ in $G(R)$. Therefore $G(R)$ is connected.
\end{proof}

\begin{cor}
Let $(R,G)$ be $e-$faithful grading where $R$ is a commutative. Then $R_e$ is direct product of two fields if and only if $R$ is direct product of two $G-$graded fields.
\end{cor}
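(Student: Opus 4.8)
The plan is to deduce this corollary directly from the two immediately preceding results. The statement concerns an $e$-faithful commutative grading $(R,G)$ and asserts the equivalence: $R_e$ is a direct product of two fields if and only if $R$ is a direct product of two $G$-graded fields. The natural strategy is to route the argument through the disconnectedness of the respective intersection graphs, since Theorem \ref{t01} (the nongraded case, due to Chakrabarty et al.) characterizes $R_e$ being a product of two fields via disconnectedness of $G(R_e)$, while Theorem \ref{c11} characterizes $R$ being a product of two $G$-graded fields via disconnectedness of $Gr_G(R)$.

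First I would recall the theorem just proved, namely that under the $e$-faithful hypothesis, $G(R_e)$ is connected if and only if $Gr_G(R)$ is connected. Taking contrapositives, this says $G(R_e)$ is disconnected if and only if $Gr_G(R)$ is disconnected. I would then chain the three equivalences together as follows. By Theorem \ref{t01}, since $R$ (and hence $R_e$) is commutative, $R_e$ is a direct product of two fields precisely when $G(R_e)$ is disconnected. By the connectivity theorem, $G(R_e)$ is disconnected precisely when $Gr_G(R)$ is disconnected. Finally, by Theorem \ref{c11}, $Gr_G(R)$ is disconnected precisely when $R \cong R_1 \times R_2$ for $G$-graded fields $R_1, R_2$. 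Composing these three biconditionals yields the claimed equivalence.

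One point I would want to handle carefully is the applicability of Theorem \ref{t01}, whose hypothesis requires $R_e$ to be commutative; this follows immediately since $R$ is assumed commutative and $R_e$ is a subring. I would also need to confirm that the connectivity theorem applies, which requires $R_e$ to have at least two proper left ideals so that $G(R_e)$ has a meaningful notion of connectivity (and so that the degenerate cases $N_1$ are excluded); in the product-of-fields situation this is automatic, and in the reverse direction a $G$-graded field decomposition forces the appropriate structure on $R_e$.

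The step I expect to be the main obstacle is not any single deduction but rather ensuring the edge cases align, particularly the ``at least two proper ideals'' hypothesis embedded in the connectivity results. If $R_e$ or the relevant component were to have too few ideals, the biconditionals might collapse to vacuous or trivial graphs ($N_1$), and I would need to verify that in the present setting such degeneracies cannot break the chain. Assuming these hypotheses are met — which the product structure guarantees in both directions — the corollary follows formally from threading together Theorem \ref{t01}, the connectivity theorem, and Theorem \ref{c11}, with no additional computation required.
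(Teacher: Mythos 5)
Your proposal is correct and follows essentially the same route as the paper: the paper's proof simply cites Theorem \ref{t01} and Corollary \ref{c11}, implicitly using the immediately preceding connectivity equivalence for $e$-faithful gradings to bridge the two, exactly as you chain the three biconditionals. Your additional attention to the degenerate cases (too few ideals) is a reasonable extra check that the paper omits, but it does not change the argument.
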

\begin{proof}
The proof follows directly from Theorem \ref{t01} and Corollary \ref{c11}.
\end{proof}

\begin{thm}
Let $(R,G)$ be $e-$faithful grading. Then $\gamma(G(R_e))=\gamma(Gr_G(R))$.
\end{thm}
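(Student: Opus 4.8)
The plan is to pass through the quotient graph $Gr_e(R)$ introduced just before Theorem \ref{t1001}. Since $\phi:G(R_e)\longrightarrow Gr_e(R)$ is a graph isomorphism and the domination number is a graph invariant, we immediately get $\gamma(G(R_e))=\gamma(Gr_e(R))$. Thus it suffices to prove $\gamma(Gr_e(R))=\gamma(Gr_G(R))$, and the whole statement reduces to comparing $Gr_G(R)$ with its quotient over $\sim$.

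The key structural observation, recorded in properties (1) and (2) preceding Theorem \ref{t1001}, is that $Gr_G(R)$ is a ``clique blow-up'' of $Gr_e(R)$: the vertex set $hI^{\ast}(R)$ is partitioned into the classes $[RI_e]$, each of which is a clique by (1), and by (2) two vertices lying in distinct classes $[RI_e]$ and $[RJ_e]$ are adjacent in $Gr_G(R)$ precisely when $[RI_e]$ and $[RJ_e]$ are adjacent in $Gr_e(R)$. In other words, the adjacency in $Gr_G(R)$ between two distinct classes is all-or-nothing and is governed entirely by the quotient.

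For the inequality $\gamma(Gr_G(R))\le \gamma(Gr_e(R))$, I would take a dominating set $\mathcal{D}$ of $Gr_e(R)$ and lift it: for each class $[RI_e]\in\mathcal{D}$ choose one representative vertex $K_{I_e}\in[RI_e]$, and set $D=\{K_{I_e}:[RI_e]\in\mathcal{D}\}$, so $|D|=|\mathcal{D}|$. Given any vertex $L$ of $Gr_G(R)$, it lies in some class $[RJ_e]$; since $\mathcal{D}$ dominates the quotient, either $[RJ_e]\in\mathcal{D}$, in which case $L$ equals or is clique-adjacent to $K_{J_e}\in D$, or $[RJ_e]$ is adjacent in $Gr_e(R)$ to some $[RI_e]\in\mathcal{D}$, in which case $L$ is adjacent to $K_{I_e}\in D$ by property (2). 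Hence $D$ dominates $Gr_G(R)$. For the reverse inequality, I would take a dominating set $D$ of $Gr_G(R)$ and project it: let $\mathcal{D}=\{[RI_e]:[RI_e]\cap D\neq\emptyset\}$, so $|\mathcal{D}|\le|D|$. Given any class $[RJ_e]$, choose any $L\in[RJ_e]$; since $D$ dominates $Gr_G(R)$, either $L\in D$, forcing $[RJ_e]\in\mathcal{D}$, or $L$ is adjacent to some $U\in D$ lying in a class $[RI_e]$. If $[RI_e]=[RJ_e]$ then $[RJ_e]\in\mathcal{D}$; otherwise, by property (2) the adjacency of $L$ and $U$ forces $[RJ_e]$ adjacent to $[RI_e]\in\mathcal{D}$ in the quotient. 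Either way $[RJ_e]$ is dominated, so $\mathcal{D}$ dominates $Gr_e(R)$. Combining gives $\gamma(Gr_G(R))=\gamma(Gr_e(R))=\gamma(G(R_e))$.

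The main obstacle is the bookkeeping in the projection direction: one must check that when a dominating vertex $U\in D$ dominates $L$ from a \emph{different} class, this genuinely yields an edge in the quotient. This is exactly where the all-or-nothing adjacency from property (2) is essential, since without it a cross-class edge in $Gr_G(R)$ need not descend to an edge of $Gr_e(R)$. One should also keep the argument valid when the domination numbers are infinite cardinals, but since each direction merely exhibits a dominating set of cardinality at most that of the given one, the inequalities, and hence the equality, hold with no finiteness assumption on $R_e$.
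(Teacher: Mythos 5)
Your proposal is correct and follows essentially the same route as the paper: both directions lift a dominating set of $G(R_e)$ (equivalently of the quotient $Gr_e(R)$) by choosing class representatives, and project a dominating set of $Gr_G(R)$ down via the classes $[RI_e]$, using properties (1) and (2) of the relation $\sim$. The only cosmetic difference is in the projection direction, where you observe directly that $|\mathcal{D}|\le|D|$ suffices, whereas the paper argues via minimality that the projection of a minimum dominating set is injective; both yield the same inequality.
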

\begin{proof}
Let $S\subseteq I^{\ast}(R_e)$ be a minimal dominating set in $G(R_e)$, and let $\mathscr{S}=\{RI_e\mid I_e\in S\}$. By Theorem \ref{t1001} we have $|\mathscr{S}|=|S|$, and since $[RI_e]$ is a clique in $Gr_G(R)$, we get $\mathscr{S}$ is a dominating set in $Gr_G(R)$. Hence $\gamma(G(R_e))\geq \gamma(Gr_G(R)$. Now assume $\mathscr{S}$ is a minimal  dominating set in $\gamma(Gr_G(R)$, and let $S=\{I\cap R_e\mid I\in \mathscr{S}\}$. So $S$ is a dominating set in $G(R_e)$. If $[I]=[J]$ for some $I,J\in \mathscr{S}$ with $I\neq J$, then $S\setminus\{I\}$ is a dominating set in $Gr_G(R)$, a contradiction. Hence $|S|=|\mathscr{S}|$. So $\gamma(G(R_e))\leq \gamma(Gr_G(R))$.
\end{proof}

\begin{cor}
Let $(R,G)$ be $e-$faithful grading. Then $\omega (Gr_G(R))<\infty$ if and only if $\omega (G(R_e))<\infty$ and $|[RI_e]|\leq \infty$ for all $I_e\in I^{\ast}(R_e)$. Moreover, if $\omega (Gr_G(R))<\infty$ then $\omega (Gr_G(R))=Max\left\{\displaystyle\sum_{I_e\in C}|[RI_e]|\mid C \text{is a clique in } G(R_e)\right\}$.
\end{cor}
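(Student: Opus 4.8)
The plan is to read the corollary off the ``blow-up'' description of $Gr_G(R)$ supplied by Theorem \ref{t1001} together with the two properties (1) and (2) of the classes of $\sim$. Concretely, the vertex set $hI^{\ast}(R)$ is partitioned into the cliques $[RI_e]$, $I_e\in I^{\ast}(R_e)$, and by property (2) the adjacency between two distinct classes $[RI_e]$ and $[RJ_e]$ is all-or-nothing: every vertex of one is joined to every vertex of the other precisely when $I_e\cap J_e\neq\{0\}$, that is, precisely when $I_e$ and $J_e$ are adjacent in $G(R_e)$. Thus $Gr_G(R)$ is obtained from $G(R_e)$ by replacing each vertex $I_e$ with the cluster $[RI_e]$ and joining two clusters completely exactly along the edges of $G(R_e)$; this is the structure I would exploit throughout.

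First I would record the key structural step: a subset $S\subseteq hI^{\ast}(R)$ is a clique in $Gr_G(R)$ if and only if the set of classes it meets, $C_S=\{I_e\in I^{\ast}(R_e)\mid S\cap[RI_e]\neq\emptyset\}$, is a clique in $G(R_e)$. For the forward direction, if $S$ is a clique and $I_e,J_e\in C_S$ are distinct, choosing $K\in S\cap[RI_e]$ and $L\in S\cap[RJ_e]$ gives $K\cap L\neq\{0\}$, whence $I_e\cap J_e\neq\{0\}$ by property (2), so $C_S$ is a clique. Conversely, for any clique $C$ in $G(R_e)$ the full union $\bigcup_{I_e\in C}[RI_e]$ is a clique in $Gr_G(R)$: two members in the same class are adjacent by property (1), and two members in different classes are adjacent by property (2), since their classes are adjacent in $G(R_e)$.

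The displayed formula then follows by maximizing cluster by cluster. For any clique $S$ one has $|S|=\sum_{I_e\in C_S}|S\cap[RI_e]|\leq\sum_{I_e\in C_S}|[RI_e]|$, and equality is attained by taking all of each class; hence the largest clique sitting over a fixed clique $C$ of $G(R_e)$ has order $\sum_{I_e\in C}|[RI_e]|$. Taking the supremum over all cliques $C$ of $G(R_e)$ yields $\omega(Gr_G(R))=\sup_C\sum_{I_e\in C}|[RI_e]|$, which is the stated maximum whenever it is finite.

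Finally I would extract the finiteness criterion directly from this formula. For the forward implication, assume $\omega(Gr_G(R))<\infty$. Each class $[RI_e]$ is itself a clique by property (1), so $|[RI_e]|\leq\omega(Gr_G(R))<\infty$; and every clique $C$ of $G(R_e)$ lifts, by choosing one representative per class, to a clique of size $|C|$ in $Gr_G(R)$, giving $\omega(G(R_e))\leq\omega(Gr_G(R))<\infty$. The converse is where I expect the real work to lie, and it is the main obstacle: $\omega(G(R_e))<\infty$ bounds only the \emph{number} of summands in $\sum_{I_e\in C}|[RI_e]|$, so finiteness of the supremum also requires control of the summands themselves. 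Since each singleton $\{I_e\}$ is a clique in $G(R_e)$, an unbounded family of finite classes already forces $\omega(Gr_G(R))=\infty$; hence the clean equivalence is ``$\omega(G(R_e))<\infty$ \emph{and} the sizes $|[RI_e]|$ are bounded.'' To match the hypothesis exactly as stated I would therefore have to argue that, under $e$-faithfulness, finiteness of each $|[RI_e]|$ already yields a uniform bound (for instance by using that $\omega(G(R_e))<\infty$ forces $R_e$ to be Artinian and then bounding the graded ideals of $R$ lying over a fixed $I_e$); pinning down this uniformity is the delicate point of the argument.
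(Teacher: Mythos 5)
Your core argument is exactly the paper's: the paper's entire proof is the one-line observation that $C$ is a clique in $G(R_e)$ if and only if $\bigcup_{I_e\in C}[RI_e]$ is a clique in $Gr_G(R)$, which is precisely the cluster/blow-up correspondence you spell out, so your first three paragraphs are a correct (and more careful) rendering of the intended proof. The difficulty you flag in your last paragraph is genuine and is not addressed by the paper at all: with $\omega$ defined as the supremum of clique orders, the hypothesis ``$\omega(G(R_e))<\infty$ and each $|[RI_e]|<\infty$'' only guarantees that every individual clique of $Gr_G(R)$ is finite (a finite sum of finite class sizes), not that the supremum $\sup_C\sum_{I_e\in C}|[RI_e]|$ is finite; if the class sizes $|[RI_e]|$ are finite but unbounded over the isolated vertices of $G(R_e)$, the right-hand side is infinite while the stated hypotheses hold. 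The paper's ``It is clear that\ldots\ Hence the result'' silently conflates these, so the backward implication as stated needs either the stronger hypothesis that the $|[RI_e]|$ are uniformly bounded, or a ring-theoretic argument (which neither you nor the paper supplies) that $e$-faithfulness plus finiteness of each class forces such a bound. You were right to single this out as the delicate point rather than paper over it.
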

\begin{proof}
It is clear that $C$ is a clique in $G(R)$ if and only if $\bigcup_{I_e\in C}[I_e]$ is a clique in $Gr_G(R)$. Hence the result.
\end{proof}
A grading $(R,G)$ is called strong (resp. first strong) if $1\in R_{\sigma}R_{\sigma^{-1}}$ for all $\sigma\in G$ (resp. $\sigma\in supp(R,G)$) (see \cite{abd10,ns82,re94}). It is know that $(R,G)$ is strong if and only if $R_{\tau}R_{\sigma}=R_{\tau\sigma}$ for all $\tau,\sigma\in G$. In \cite[Corollary 1.4]{ns82} it is proven that if $(R,G)$ is a strong grading and $I$ is a left $G-$graded ideal of $R$, then $I=RI_e$, where $I_e=I\cap R_e$. In fact this result is still true in case $H=supp(R,G)$ is a subgroup of $G$ and  $R=\oplus_{\sigma\in H}R_{\sigma}$ is a strongly $H-$graded ring. Fact 2.5 in \cite{re94} states that $(R,G)$ is first strong if and only if $H=supp(R,G)\leq G$ and $(R,H)$ is strong. So next we state a weaker version of \cite[Corollary 1.4]{ns82}.

\begin{lem}\label{l0} Let $(R,G)$ be first strong grading. Then for every $I\in hI^{\ast}(R)$, $I=RI_e$, where $I_e=I\cap R_e$
\end{lem}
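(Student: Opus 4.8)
The plan is to prove the two inclusions $RI_e\subseteq I$ and $I\subseteq RI_e$ separately. The first is immediate: since $I_e=I\cap R_e\subseteq I$ and $I$ is a left ideal, $RI_e\subseteq RI\subseteq I$. For the reverse inclusion I would either invoke the extension of \cite[Corollary 1.4]{ns82} recorded above (via Fact~2.5 of \cite{re94}, which guarantees $H=supp(R,G)\leq G$ and that $(R,H)$ is strong), or, what is cleaner and essentially self-contained, argue directly from the definition of a first strong grading. I will describe the direct route.

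Since $I\in hI^{\ast}(R)$ is graded, $I=\oplus_{\sigma\in G}(I\cap R_{\sigma})$, so it is enough to show that every nonzero homogeneous element $x\in I\cap R_{\sigma}$ lies in $RI_e$. Such an $x$ forces $R_{\sigma}\neq\{0\}$, i.e. $\sigma\in supp(R,G)$, which is precisely the range of degrees on which the first strong hypothesis applies. By that hypothesis $1\in R_{\sigma}R_{\sigma^{-1}}$, so I may write $1=\sum_{i}u_iv_i$ with $u_i\in R_{\sigma}$ and $v_i\in R_{\sigma^{-1}}$.

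The core computation is then $x=1\cdot x=\sum_i u_i(v_i x)$. Each factor $v_i x$ lies in $R_{\sigma^{-1}}R_{\sigma}\subseteq R_{\sigma^{-1}\sigma}=R_e$ by the grading axiom, and it lies in $I$ because $I$ is a left ideal; hence $v_i x\in I\cap R_e=I_e$. As $u_i\in R_{\sigma}\subseteq R$, this shows $x=\sum_i u_i(v_i x)\in RI_e$. Running over all homogeneous components gives $I\subseteq RI_e$, and with the trivial inclusion we obtain $I=RI_e$.

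I do not expect a genuine obstacle. The one point requiring care is that a first strong grading only supplies $1\in R_{\sigma}R_{\sigma^{-1}}$ for $\sigma\in supp(R,G)$, not for every $\sigma\in G$; this is harmless here precisely because any nonzero homogeneous element of $I$ has its degree in the support, so the identity is available exactly when it is needed. If one instead prefers to quote the extended \cite[Corollary 1.4]{ns82}, then Fact~2.5 of \cite{re94} is the step that licenses its use.
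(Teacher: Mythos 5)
Your proof is correct. The paper itself gives no argument for this lemma: it is stated as a ``weaker version'' of \cite[Corollary 1.4]{ns82}, justified by the remark that Fact~2.5 of \cite{re94} identifies first strong gradings with gradings whose support $H$ is a subgroup and for which $(R,H)$ is strong, so that the cited corollary applies. You mention that citation route but then carry out the underlying computation yourself, and your version is sound: the inclusion $RI_e\subseteq I$ is trivial, and for the reverse you reduce to homogeneous $x\in I\cap R_{\sigma}$ (legitimate since $I$ is graded), observe that $x\neq 0$ forces $\sigma\in supp(R,G)$ so that $1=\sum_i u_iv_i$ with $u_i\in R_{\sigma}$, $v_i\in R_{\sigma^{-1}}$ is available, and conclude from $x=\sum_i u_i(v_ix)$ with $v_ix\in R_{\sigma^{-1}}R_{\sigma}\cap I\subseteq I\cap R_e=I_e$. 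Your flagged point of care --- that first strongness only supplies the identity decomposition for degrees in the support, which is exactly where it is needed --- is the right observation and is precisely what makes the ``weaker'' hypothesis suffice. The trade-off is the usual one: the paper's citation is shorter, while your direct argument is self-contained and makes visible why the support restriction is harmless; either is acceptable here.
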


\begin{thm}\label{56} Let $(R,G)$ be first strong grading. Then $G(R_e)\cong Gr_G(R)$ \end{thm}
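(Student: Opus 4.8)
The plan is to show directly that the assignment $\phi(I_e)=RI_e$, which Theorem \ref{t1001} already identifies as an isomorphism onto the \emph{quotient} graph $Gr_e(R)$, is in the first strong case an isomorphism onto $Gr_G(R)$ itself. The essential new input is Lemma \ref{l0}: under a first strong grading every $I\in hI^{\ast}(R)$ satisfies $I=R(I\cap R_e)$, so the passage $I\mapsto I\cap R_e$ loses no information and the $\sim$-classes of Theorem \ref{t1001} collapse to singletons. Concretely, I would define $\phi\colon G(R_e)\to Gr_G(R)$ by $\phi(I_e)=RI_e$ and verify it is a well-defined graph isomorphism.

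First I would check that $\phi$ is well defined and injective. For $I_e\in I^{\ast}(R_e)$ the ideal $RI_e$ is $G$-graded (the observation preceding Theorem \ref{t100}) and satisfies $RI_e\cap R_e=I_e$; since $I_e\neq\{0\}$ this gives $RI_e\neq\{0\}$, and since $I_e\neq R_e$ it gives $RI_e\neq R$ (otherwise $RI_e\cap R_e=R_e$). Thus $RI_e\in hI^{\ast}(R)$. Injectivity is immediate: if $RI_e=RJ_e$, intersecting with $R_e$ yields $I_e=J_e$. For surjectivity I would invoke Lemma \ref{l0}: any $I\in hI^{\ast}(R)$ equals $RI_e$ with $I_e=I\cap R_e$; here $I_e\neq\{0\}$ because $I=RI_e\neq\{0\}$ forces $I_e\neq\{0\}$, and $I_e\neq R_e$ because $1\in I_e$ would give $I=R$. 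Hence $I_e\in I^{\ast}(R_e)$ and $\phi(I_e)=I$.

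Next I would verify that $\phi$ both preserves and reflects adjacency, i.e. $I_e\cap J_e\neq\{0\}$ if and only if $RI_e\cap RJ_e\neq\{0\}$. The forward direction is trivial since $I_e\cap J_e\subseteq RI_e\cap RJ_e$. The reverse direction is where the real work lies and is the step I expect to be the main obstacle. Set $K=RI_e\cap RJ_e$; by Lemma \ref{B} this is a $G$-graded ideal, so Lemma \ref{l0} gives $K=R(K\cap R_e)$. If $K\neq\{0\}$ then $K\cap R_e\neq\{0\}$, for otherwise $K=R\cdot\{0\}=\{0\}$. Finally $K\cap R_e=(RI_e\cap R_e)\cap(RJ_e\cap R_e)=I_e\cap J_e$, so $I_e\cap J_e\neq\{0\}$, as required.

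Together these steps show $\phi$ is a bijection that preserves and reflects edges, hence a graph isomorphism $G(R_e)\cong Gr_G(R)$. I note that the whole argument can alternatively be routed through the earlier machinery: first strong implies left $e$-faithful (for $0\neq x_\tau\in R_\tau$ with $\tau\in supp(R,G)$ one has $R_e x_\tau=R_\tau R_{\tau^{-1}}x_\tau$, so $R_{\tau^{-1}}x_\tau=\{0\}$ would force $x_\tau=0$), whence Theorem \ref{t1001} applies, and Lemma \ref{l0} is then exactly what guarantees that each class $[RI_e]$ is a singleton, identifying $Gr_e(R)$ with $Gr_G(R)$.
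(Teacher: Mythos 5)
Your proposal is correct and takes essentially the same route as the paper: the paper also sends $I_e\mapsto RI_e$, notes that first strong implies left $e$-faithful, and concludes via Lemma \ref{l0} and Theorem \ref{t1001} --- which is precisely the ``alternative'' you sketch in your last paragraph. Your main body simply unpacks that citation into a direct verification (with the reverse adjacency implication obtained from Lemma \ref{l0} applied to $K=RI_e\cap RJ_e$ rather than from $e$-faithfulness), which is sound and somewhat more self-contained, but not a different argument.
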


\begin{proof}
Since $(R,G)$ is first strong , so by Lemma \ref{l0}, we have $hI^{\ast}(R)=\{RI_e\mid I_e\in I^{\ast}(R_e)\}$. Moreover $(R,G)$ is left $e$-faithful, because if for some $\tau \in supp(R,G)$ and $x_{\tau}\in R_{\tau}$, we have $R_{\tau^{-1}}x_{\tau}=\{0\}$, then $R_ex_{\tau}=R_{\tau}R_{\tau^{-1}}x_{\tau}=\{0\}$, and hence $x_{\tau}=0$. Now the result follows by Theorem \ref{t1001}.
\end{proof}




\begin{exa}
Let $R$ be a ring and $G$ be a finite group then the group ring $R[G]$ is strongly $G-$graded ring by $(R[G])_{\sigma}=R\sigma$. Hence by Theorem \ref{56}, $Gr_G(R[G])\cong G(R)$.\\
\end{exa}

\section{Intersection graph of graded ideals of idealization}\label{sec5}
Let $R$ be a commutative ring and $M$ be an $R-$module. Then the idealization $R(+)M$ is the ring whose elements are those of $R\times M$ equipped with addition and multiplication defined by $(r,m)+(r',m')=(r+r',m+m')$ and $(r,m)(r',m')=(rr',rm'+r'm)$ respectively. The idealization $R(+)M$ is $\mathbb{Z}_2-$graded by the gradation $(R(+)M)_0=R\oplus 0$ and $(R(+)M)_1=0\oplus M$. However this grading is not first-strong and not left $e-$faithful because $(0\oplus M)^2=0\oplus0\neq R\oplus 0$. In the sequel we assume that $M\neq0$ $R(+)M\cong R$ and the $Z_2-$grading of $R(+)M$ is given by $(R(+)M)_0=R\oplus 0$ and $(R(+)M)_1=0\oplus M$. The next theorem due to \cite{an09} gives a characterization of the $\mathbb{Z}_2-$graded ideals of $R(+)M$.

\begin{lem}\label{17}\cite[Theorem 3.3]{an09}
Let $R$ be a commutative ring and $M$ be an $R-$module. Then\\
(1) The $Z_2-$graded ideals of $(R(+)M)$ have the form $I(+)N$ weher $I$ is an ideal of $R$, $N$ is a submodule of $M$ and $IM\subset N$.\\
(2) If $I_1(+)N_1$ and $I_2(+)N_2$ are $Z_2-$graded ideals of $R(+)M$ then $(I_1(+)N_1) \cap (I_2(+)N_2)=(I_1\cap I_2) (+) (N_1\cap N_2)$.
\end{lem}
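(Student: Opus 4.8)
The plan is to prove both parts by working directly with the grading and the multiplication of the idealization, identifying $I(+)N$ with the set $\{(i,n) : i\in I,\, n\in N\}\subseteq R(+)M$.

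For part (1), I would first treat the forward direction. Let $J$ be a $\mathbb{Z}_2$-graded ideal of $R(+)M$. By definition of graded ideal, $J=J_0\oplus J_1$ where $J_0=J\cap(R(+)M)_0=J\cap(R\oplus 0)$ and $J_1=J\cap(R(+)M)_1=J\cap(0\oplus M)$. Writing $J_0=I\oplus 0$ and $J_1=0\oplus N$ defines subsets $I\subseteq R$ and $N\subseteq M$, and clearly $J=I(+)N$. It then remains to check the three claimed properties. That $I$ is an ideal follows by taking $r\in R$, $i\in I$ and noting $(r,0)(i,0)=(ri,0)\in J\cap(R\oplus 0)=J_0$; similarly $N$ is a submodule via $(r,0)(0,n)=(0,rn)\in J_1$. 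The constraint $IM\subseteq N$ comes from the mixed product $(i,0)(0,m)=(0,im)\in J\cap(0\oplus M)=J_1$ for $i\in I$ and $m\in M$.

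For the reverse direction of part (1), suppose $I$ is an ideal, $N$ a submodule, and $IM\subseteq N$. I would verify that $I(+)N$ is a graded ideal. Closure under addition is immediate. For the ideal property, compute $(r,m)(i,n)=(ri,\,rn+mi)$ for $(r,m)\in R(+)M$ and $(i,n)\in I(+)N$: here $ri\in I$ since $I$ is an ideal, $rn\in N$ since $N$ is a submodule, and $mi=im\in IM\subseteq N$, so the product lies in $I(+)N$. Since $R$ is commutative, $R(+)M$ is commutative, so checking one-sided multiplication suffices. Finally $I(+)N$ is graded because $(I(+)N)\cap(R\oplus 0)=I\oplus 0$ and $(I(+)N)\cap(0\oplus M)=0\oplus N$ furnish the required homogeneous decomposition. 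Part (2) is then a direct set-theoretic verification: a pair $(r,m)$ lies in both $I_1(+)N_1$ and $I_2(+)N_2$ precisely when $r\in I_1\cap I_2$ and $m\in N_1\cap N_2$, which is exactly the condition $(r,m)\in(I_1\cap I_2)(+)(N_1\cap N_2)$.

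The only point requiring genuine care, and the natural candidate for the main obstacle although it remains elementary, is recognizing that the condition $IM\subseteq N$ is simultaneously \emph{forced} by the grading, via the mixed product $(i,0)(0,m)$, and exactly what is \emph{needed} to close $I(+)N$ under multiplication, namely the cross term $mi$ appearing in $(r,m)(i,n)$. Everything else reduces to routine bookkeeping once this interplay between the multiplication of the idealization and its homogeneous components is made explicit.
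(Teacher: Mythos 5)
Your proof is correct. Note that the paper itself gives no argument for this lemma: it is quoted verbatim from Anderson and Winders \cite[Theorem~3.3]{an09}, so there is nothing in the paper to compare against. Your direct verification --- decomposing a graded ideal $J$ as $\bigl(J\cap(R\oplus 0)\bigr)\oplus\bigl(J\cap(0\oplus M)\bigr)$, extracting $I$ and $N$, and observing that the mixed product $(i,0)(0,m)=(0,im)$ both forces and suffices for the condition $IM\subseteq N$ --- is exactly the standard argument one finds in the cited source, and part (2) is, as you say, an immediate set-theoretic check once part (1) identifies graded ideals with pairs $(I,N)$.
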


\begin{thm}\label{t777}
Let $R$ be a commutative ring and $M$ be an $R-$module. Then\\
(1) $Gr_{Z_2}(R(+)M)$ is disconnected if and only if $R$ is a field and $M$ is a simple module.\\
(2) If one of the followings holds then $g(Gr{Z_2}(R(+)M))=3$.
\begin{enumerate}[(i)]
\item\label{71} $R$ and $M$ are both not simple.
\item\label{72} $|G(R)|\geq 2$.
\item\label{73} $RM\neq M$
\end{enumerate}
\end{thm}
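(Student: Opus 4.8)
The plan is to reduce every question to the explicit description of the $\mathbb{Z}_2$-graded ideals of $R(+)M$ supplied by Lemma \ref{17}: each vertex of $Gr_{\mathbb{Z}_2}(R(+)M)$ has the form $I(+)N$ with $I$ an ideal of $R$, $N$ a submodule of $M$, and $IM\subseteq N$, while intersections are coordinatewise, $(I_1(+)N_1)\cap(I_2(+)N_2)=(I_1\cap I_2)(+)(N_1\cap N_2)$. Two observations will drive the whole argument. First, since $M\neq 0$, the ideal $0(+)M$ is always a vertex and $(0(+)M)\cap(I(+)N)=0(+)N$, so $0(+)M$ is adjacent to $I(+)N$ exactly when $N\neq 0$. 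Second, for any ideal $K\subsetneq R$ the ideal $K(+)M$ is a vertex, and two such ideals meet in $(K_1\cap K_2)(+)M\supseteq 0(+)M\neq 0$; hence all ideals of the form $K(+)M$ form a clique. These facts let me both detect edges and manufacture triangles.

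For part (1) I would read ``disconnected'' through Theorem \ref{t1}, so that it means \emph{null}. For the reverse implication, assume $R$ is a field and $M$ is simple: running through Lemma \ref{17}, $I\in\{0,R\}$ and $N\in\{0,M\}$, and the constraint $IM\subseteq N$ together with $RM=M$ discards $R(+)0$, leaving $0(+)M$ as the only proper nontrivial graded ideal, so there are no edges. For the forward implication, suppose the graph is null: if $M$ had a submodule $0\subsetneq N\subsetneq M$, then $0(+)N$ and $0(+)M$ would be distinct adjacent vertices, forcing $M$ simple; and if $R$ had a proper nonzero ideal $I$, then $I(+)M$ and $0(+)M$ would be distinct adjacent vertices, forcing $R$ a field. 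I would flag here one delicate point the author glosses over: with $R$ a field and $M$ simple the graph is actually the one-vertex graph, so part (1) is really the assertion that $Gr_{\mathbb{Z}_2}(R(+)M)$ is \emph{edgeless} precisely in this case, and ``disconnected'' must be interpreted accordingly.

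For part (2), Theorem \ref{t3} tells me $g(Gr_{\mathbb{Z}_2}(R(+)M))\in\{3,\infty\}$, so it suffices to exhibit a single triangle under each hypothesis. Under (ii), $|G(R)|\geq 2$ gives two distinct nonzero proper ideals $I_1,I_2$ of $R$, whence $0(+)M$, $I_1(+)M$, $I_2(+)M$ are three distinct vertices in the clique above, a triangle. Under (i), ``$R$ not simple'' yields a nonzero proper ideal $I$ and ``$M$ not simple'' yields a submodule $0\subsetneq N\subsetneq M$; then $0(+)M$, $0(+)N$, $I(+)M$ are pairwise distinct with pairwise intersections $0(+)N$, $0(+)M$, $0(+)N$, all nonzero, giving a triangle. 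In both cases the verification is routine once the three ideals are written down.

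The step I expect to be the genuine obstacle is hypothesis (iii), $RM\neq M$. Because the paper assumes unital rings, one must be careful about whether the module action is unital; if it is, then $RM=M$ and the hypothesis is vacuous, so the real content requires allowing non-unital actions where $RM\subsetneq M$ can occur. The leverage is that $RM\subsetneq M$ creates vertices of the new shape $R(+)N$ with $RM\subseteq N\subsetneq M$, valid because $RM\subseteq N$ and proper because $N\neq M$. I would split on whether $RM=0$. When $RM\neq 0$ the submodule $RM$ is proper and nonzero, and the triangle $0(+)M$, $0(+)RM$, $R(+)RM$ works, its three pairwise intersections all equalling $0(+)RM\neq 0$. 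When $RM=0$ the constraint $IM\subseteq N$ is automatic, so $0(+)M$, $0(+)N$, $R(+)N$ is a triangle for any subgroup $0\subsetneq N\subsetneq M$, and one only needs such an $N$ to exist. Confirming the existence and distinctness of the required submodules and ideals in each subcase, and settling the role of the unital-module convention, is the only fiddly part; once any triangle is in hand, Theorem \ref{t3} immediately upgrades it to $g=3$.
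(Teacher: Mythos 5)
Your proof follows essentially the same route as the paper: both reduce everything to Lemma \ref{17} and exhibit the identical triangles $I(+)M$--$0(+)M$--$0(+)N$, \ $I(+)M$--$J(+)M$--$0(+)M$, and $R(+)RM$--$0(+)RM$--$0(+)M$, then invoke Theorem \ref{t3} to conclude $g=3$. The two points you flag as delicate are in fact places where you are more careful than the paper: in part (1) the paper asserts disconnectedness even when $0(+)M$ is the only vertex (a one-vertex graph, which by Theorem \ref{t1} would count as connected, so ``disconnected'' must indeed be read as ``edgeless''), and in case (iii) the paper only treats $RM\neq 0$, silently omitting the subcase $RM=0$ that you isolate --- and, as you observe, for unital modules hypothesis (iii) is vacuous, so any nontrivial content requires non-unital actions. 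Your one remaining loose end (the existence of a proper nonzero subgroup $N$ of $M$ when $RM=0$) cannot always be supplied (take $R$ a field acting trivially on a group $M$ of prime order), but that is a defect of the statement rather than of your argument.
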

\begin{proof}
(1) Suppose $Gr_{Z_2}(R(+)M)$ is disconnected. If $I$ is a nontrivial proper ideal of $R$ then $I(+)M$ and $0(+)M$ are adjacent in $Gr_{Z_2}(R(+)M)$, a contradiction. So $R$ is simple. Similarly, if $N$ is a nontrivial proper submodule of $M$ then  $0(+)M$ and $0(+)N$ are adjacent in $Gr_{Z_2}(R(+)M)$, a contradiction. So $M$ is simple. Conversely, assume $R$ and $M$ are simple. Then the $Z_2$-graded proper ideals of $R(+)M$ are $0(+)M$ and possibly $R(+)0$ (if $Ann_R(M)=0$). In either case $Gr_{Z_2}(R(+)M)$ is disconnected.

(2) (\ref{71}) Let $I$ be a nontrivial proper ideal of $R$ and $N$ be a nontrivial proper submodule of $M$. Then $I(+)M-0(+)M-0(+)N$ is a $3-$cycle in $Gr_{Z_2}(R(+)M)$. Hence $g(Gr{Z_2}(R(+)M))=3$.  \\
(\ref{72}) Suppose $I$ and $J$ be distinct nontrivial proper ideal of $R$. Then $I(+)M-J(+)M-0(+)M$ is a $3-$cycle. Hence the result.\\
(\ref{73}) Suppose $RM\neq 0$, then $(R(+)RM)-(0(+)RM)-(0(+)M)$ is a $3-$cycle.
\end{proof}

From Theorem  \ref{t777} we have the following result.

\begin{cor}
Let $R$ be a commutative ring. Then $Gr_{Z_2}(R(+)R)$ is connected if and only if $R$ is not simple if and only if $g(Gr_{Z_2}(R(+)R))=3$.
\end{cor}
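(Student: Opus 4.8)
The plan is to read the statement as the specialization of Theorem \ref{t777} to $M = R$ and to verify that the three asserted conditions are equivalent by chasing a short cycle of implications. The preliminary observation I would make is that, for $M = R$, all the ``simplicity'' hypotheses appearing in Theorem \ref{t777} coincide: a commutative ring $R$ is simple precisely when it is a field, while $R$ regarded as a module over itself is a simple module precisely when its only submodules---namely its ideals---are $0$ and $R$, i.e. again precisely when $R$ is a field. Hence the phrase ``$R$ is a field and $M$ is a simple module'' in part (1) collapses to ``$R$ is simple,'' and the phrase ``$R$ and $M$ are both not simple'' in part (2)(\ref{71}) collapses to ``$R$ is not simple.''

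With this dictionary I would establish the cycle ``$R$ not simple'' $\Rightarrow$ ``$g = 3$'' $\Rightarrow$ ``connected'' $\Rightarrow$ ``$R$ not simple.'' For the first implication, if $R$ is not a field then neither $R$ as a ring nor $R$ as a module over itself is simple, so part (2)(\ref{71}) of Theorem \ref{t777} applies directly and gives $g(Gr_{Z_2}(R(+)R)) = 3$; concretely one exhibits the triangle $I(+)R - 0(+)R - 0(+)N$ for a nontrivial proper ideal $I$ and a nontrivial proper submodule $N$, whose pairwise nontrivial intersections are read off from Lemma \ref{17}(2). For the second implication, a girth of $3$ forces the presence of an edge, so $Gr_{Z_2}(R(+)R)$ is not a null graph, and Theorem \ref{t1} then rules out disconnectedness, leaving the graph connected. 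For the third implication I would argue contrapositively: if $R$ is simple (a field), the reduction above lets Theorem \ref{t777}(1) conclude that $Gr_{Z_2}(R(+)R)$ is disconnected, hence not connected. Closing the cycle yields the mutual equivalence of the three conditions.

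The step I expect to demand the most care is the field case underlying the last implication. When $R$ is a field, $M = R$ is one-dimensional, and since $R$ carries a unit we have $RM = R = M$, so condition (\ref{73}) of Theorem \ref{t777} is vacuous and $R(+)0$ fails to be a graded ideal; the only nontrivial proper $\mathbb{Z}_2$-graded ideal of $R(+)R$ is then $0(+)R$. One must therefore confirm that the word ``disconnected'' is being used compatibly with Theorem \ref{t1}, so that the field case genuinely lands outside the connected regime rather than producing a vacuously connected single vertex. Once this bookkeeping is settled, the remainder is a routine specialization of the already-proven Theorem \ref{t777}.
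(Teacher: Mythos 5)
Your argument takes exactly the route the paper intends: the paper offers no proof beyond the remark that the corollary follows from Theorem \ref{t777}, and your cycle ``$R$ not simple $\Rightarrow g=3 \Rightarrow$ connected $\Rightarrow R$ not simple,'' together with the observation that for $M=R$ all the simplicity hypotheses of Theorem \ref{t777} collapse to ``$R$ is a field,'' is the correct way to carry out that specialization. The first two implications are sound (Theorem \ref{t777}(2)(i) gives the triangle, and an edge rules out the null graphs of Theorem \ref{t1}).

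The point you flag in your last paragraph, however, is not mere bookkeeping that ``must be confirmed''; it is a genuine defect in the statement that your proof cannot repair. When $R$ is a field, $R(+)0$ is not a $\mathbb{Z}_2$-graded ideal of $R(+)R$ (by Lemma \ref{17} one needs $R\cdot R\subseteq 0$, which fails since $R$ is unital), so the only nontrivial proper graded ideal is $0(+)R$ and the graph is $N_1$. By the paper's own convention in Theorem \ref{t1}, which characterizes disconnectedness as isomorphism with $N_n$ for $n\geq 2$, the one-vertex graph is \emph{connected}. Hence the implication ``connected $\Rightarrow R$ not simple'' is literally false for $R$ a field, and so is ``connected $\Rightarrow g=3$'' (the girth of $N_1$ is $\infty$). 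The equivalence ``$R$ not simple $\Leftrightarrow g=3$'' survives, but the ``connected'' clause does not; the corollary should be restated with ``connected'' replaced by something like ``has at least one edge'' (equivalently, is not a null graph), or the field case should be excluded. Your proof is as correct as the statement permits, but you should state explicitly that the contrapositive step fails under the paper's own conventions rather than leaving it as a point to be checked.
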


Next we give a lower bound on the clique number of $Gr_{Z_2}(R(+)R)$ using the clique number of $R$.

\begin{thm}\label{231}
Let $R$ be a commutative ring.\\
(1) If $|G(R)|$ is infinite then so is $\omega(Gr_{Z_2}(R(+)R)$.\\
(2) If $|G(R)|$ is finite, then $\omega(Gr_{Z_2}(R(+)R)\geq 1+2\omega(G(R))+|G(R)|$ with equality holds if and only if $G(R)$ is null graph.
\end{thm}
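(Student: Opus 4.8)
The plan is to work from the description of the $\mathbb{Z}_2$-graded ideals of $R(+)R$ given by Lemma \ref{17}. For $M=R$ the condition $IM\subseteq N$ becomes $I\subseteq N$, so the vertices of $Gr_{Z_2}(R(+)R)$ are exactly the expressions $I(+)N$ with $I\subseteq N$ ideals of $R$, excluding $0(+)0$ and $R(+)R$; moreover $(I_1(+)N_1)\cap(I_2(+)N_2)=(I_1\cap I_2)(+)(N_1\cap N_2)$. Hence $I_1(+)N_1$ and $I_2(+)N_2$ are adjacent if and only if $I_1\cap I_2\neq\{0\}$ or $N_1\cap N_2\neq\{0\}$, and every vertex satisfies $N\neq\{0\}$ (otherwise $I\subseteq N=\{0\}$ forces the excluded $0(+)0$).

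First I would single out the vertices $I(+)R$ with $I\neq R$. Each of these is a universal vertex: for any other vertex $J(+)N$ one has $R\cap N=N\neq\{0\}$, so they are adjacent. There are exactly $1+|G(R)|$ of them (the ideal $I=0$ together with each $I\in I^{\ast}(R)$). Part (1) is then immediate, since for $|G(R)|$ infinite these already form an infinite clique. For part (2) I would use that universal vertices lie in every maximum clique, giving $\omega(Gr_{Z_2}(R(+)R))=(1+|G(R)|)+\omega(\Gamma')$, where $\Gamma'$ is the induced subgraph on the remaining vertices, namely those $I(+)N$ with $N\in I^{\ast}(R)$. The whole problem thus reduces to showing $\omega(\Gamma')\geq 2\omega(G(R))$, with equality exactly when $G(R)$ is null.

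For the inequality I would fix a maximum clique $J_1,\dots,J_\omega$ of $G(R)$, where $\omega=\omega(G(R))$, so the $J_i$ are nonzero and pairwise intersecting. The $2\omega$ vertices $0(+)J_i$ and $J_i(+)J_i$ ($1\le i\le\omega$) then form a clique of $\Gamma'$, since any two of them share the nonzero ideal $J_i\cap J_k$ in at least one coordinate. Combined with the reduction above this already yields $\omega(Gr_{Z_2}(R(+)R))\geq 1+2\omega(G(R))+|G(R)|$.

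The equality characterization is where the real work lies, and I expect it to be the main obstacle. Suppose $G(R)$ is null. In a clique of $\Gamma'$ no two vertices can have distinct second coordinates $N_1\neq N_2$: nullity gives $N_1\cap N_2=\{0\}$, and since $I_j\subseteq N_j$ also $I_1\cap I_2\subseteq N_1\cap N_2=\{0\}$, forcing nonadjacency. So a clique lives in a single fibre $\{I(+)N:I\subseteq N\}$; and nullity forces every such fibre to consist of just $0(+)N$ and $N(+)N$, because a nonzero proper $I\subsetneq N$ would exhibit distinct nonzero proper ideals meeting nontrivially. Hence $\omega(\Gamma')\le 2$; with the lower bound this gives $\omega(\Gamma')=2\omega(G(R))$ (the case $R$ a field being trivial, both sides $0$). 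Conversely, if $G(R)$ is not null then $\omega\geq 2$, and some $J_i$ of the chosen clique must be non-minimal, since two distinct minimal ideals meet trivially and so cannot form an edge of $G(R)$. Fixing $\{0\}\neq I\subsetneq J_1$, the vertex $I(+)J_1$ is new and is adjacent to every previously chosen vertex (sharing $J_1$ with the $J_1$-fibre vertices, and $J_1\cap J_k\neq\{0\}$ with the rest), producing a clique of size $2\omega+1$; thus the inequality is strict. The one routine point to confirm throughout is that the listed vertices are genuinely distinct and that the claimed adjacencies hold coordinate-wise.
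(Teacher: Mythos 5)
Your proposal is correct, and for the lower bound and the strictness direction it is essentially the paper's argument: the authors build the same clique $\{0(+)R\}\cup\{J(+)R\mid J\in I^{\ast}(R)\}\cup\{0(+)J_i\}\cup\{J_i(+)J_i\}$ from a maximum clique of $G(R)$, and they also force strict inequality when $G(R)$ is not null by inserting one extra vertex inside a fibre (they use $(I\cap J)(+)I$ where you use $I(+)J_1$ with $I$ a nonzero proper subideal of a non-minimal $J_1$; both work for the same reason). Where you genuinely diverge is the upper bound when $G(R)$ is null: the paper observes that every ideal of $R$ is then minimal and maximal and exhibits $\{0(+)I,\,I(+)I,\,0(+)R\}\cup\mathcal{H}_3$ as a \emph{maximal} clique, which by itself does not bound $\omega$ from above since a maximal clique need not be maximum. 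Your reduction via the universal vertices $I(+)R$ --- giving $\omega(Gr_{\mathbb{Z}_2}(R(+)R))=(1+|G(R)|)+\omega(\Gamma')$ and then showing $\omega(\Gamma')\le 2$ because a clique of $\Gamma'$ must sit in a single fibre over $N$ and each fibre collapses to $\{0(+)N,\,N(+)N\}$ --- closes that gap cleanly, so your treatment of the equality case is the more complete one.
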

\begin{proof}
Let $\mathcal{C}$ be a clique of maximal size in $G(R)$ and let $$\mathcal{H}_1=\{0(+)I\mid I\in \mathcal{C}\}$$ $$\mathcal{H}_2=\{I(+)I\mid I\in \mathcal{C}\}$$ $$\mathcal{H}_3=\{J(+)R\mid J\in I^{\ast}(R)\}.$$ Then $\mathcal{H}_1\cup \mathcal{H}_2\cup\mathcal{H}_3\cup\{0(+)R\}$ is a clique in $Gr_{Z_2}(R(+)R)$. \\
(1) If $|G(R)|$ is infinite then  $\omega(Gr_{Z_2}(R(+)R)$ is infinite because $|G(R)|=|\mathcal{H}_3|$.\\
(2) Assume $|G(R)|$ is finite. Then $|\mathcal{H}_1|=|\mathcal{H}_2|=\omega(G(R))$ and $|\mathcal{H}_3|=|G(R)|$. Consequently $\omega(Gr_{Z_2}(R(+)R)\geq 1+2\omega(G(R))+|G(R)|$. It is remaining to show the last part of (2). Assume $G(R)$ is not null graph. Then $|\mathcal{C}|\geq 2$. So we can pick $I,J\in \mathcal{C}$ such that $\{0\}\neq I\cap J \subsetneq I$. This implies that $\mathcal{H}_1\cup \mathcal{H}_2\cup\mathcal{H}_3\cup\{0(+)R\}\cup\{(I\cap J)(+)I\}$ is a clique in $Gr_{Z_2}(R(+)R)$, and so $\omega(Gr_{Z_2}(R(+)R)\geq 2+2\omega(G(R))+|G(R)|$. Conversely, assume $G(R)$ is a null graph. Then $g(G(R))=1$ and every ideal of $R$ is minimal as well as maximal. If $I(+)J$ is $Z_2-$graded ideal of $R(+)R$ then $RI\subseteq J$, and so $RI=0$, $I=RI=J$, or $J=R$. Moreover, if $I$ and $J$ are distinct proper ideals in $R$ then $(I(+)I)\cap (J(+)J)=\{(0,0)\}$. So for each proper ideal $I$ of $R$, $\{0(+)I, I(+)I, 0(+)R\}\cup\mathcal{H}_3$ is maximal clique in $R(+)R$. Hence equality holds.
\end{proof}

\begin{cor}
Let $R$ be a commutative ring. Then $Gr_{Z_2}(R(+)R)$ is planar if and only if  $R$ contains at most one proper nontrivial ideal
\end{cor}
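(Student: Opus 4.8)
The plan is to translate planarity into a statement about the clique number and then invoke Kuratowski's theorem: since $K_5$ is nonplanar, any graph containing a clique on five vertices is nonplanar. The bridge to the hypothesis is the observation that $|G(R)|$ is precisely the number of proper nontrivial ideals of $R$, so the condition ``$R$ contains at most one proper nontrivial ideal'' is exactly $|G(R)| \leq 1$. I would therefore split the argument at the threshold $|G(R)| = 1$ versus $|G(R)| \geq 2$.

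For the forward implication I would argue by contrapositive: suppose $R$ has at least two proper nontrivial ideals, i.e.\ $|G(R)| \geq 2$. If $|G(R)|$ is infinite, then by Theorem \ref{231}(1) the clique number $\omega(Gr_{Z_2}(R(+)R))$ is infinite, so the graph contains $K_5$ and is nonplanar. If $|G(R)|$ is finite, then $\omega(G(R)) \geq 1$, and Theorem \ref{231}(2) gives $\omega(Gr_{Z_2}(R(+)R)) \geq 1 + 2\omega(G(R)) + |G(R)| \geq 1 + 2 + 2 = 5$; again the graph contains $K_5$ and is nonplanar. Thus nonplanarity follows uniformly whenever $|G(R)| \geq 2$.

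For the reverse implication I would verify planarity directly in the two remaining cases by enumerating the $\mathbb{Z}_2$-graded ideals of $R(+)R$ via Lemma \ref{17}, keeping in mind the defining constraint $IR \subseteq N$ (equivalently $I \subseteq N$) for a graded ideal $I(+)N$. If $|G(R)| = 0$ then $R$ is a field, and the only proper nontrivial graded ideal is $0(+)R$ (note that $R(+)0$ fails the constraint), so $Gr_{Z_2}(R(+)R) = N_1$, which is planar. If $|G(R)| = 1$, write $\mathfrak{m}$ for the unique proper nontrivial ideal; then the proper nontrivial graded ideals are exactly $0(+)\mathfrak{m}$, $0(+)R$, $\mathfrak{m}(+)\mathfrak{m}$, and $\mathfrak{m}(+)R$. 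Using the intersection formula of Lemma \ref{17}(2), each of the six pairwise intersections reduces to one of $0(+)\mathfrak{m}$, $0(+)R$, or $\mathfrak{m}(+)\mathfrak{m}$, all nonzero, so these four vertices form $K_4$, which is planar.

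The main obstacle is the reverse direction, specifically the careful and complete enumeration of the graded ideals when $|G(R)| = 1$ together with the check that all six vertex pairs are adjacent; the subtlety is not to overlook the constraint $I \subseteq N$, which rules out ideals such as $R(+)0$ and $\mathfrak{m}(+)0$ and thereby pins the vertex set down to exactly four elements. By contrast, the forward direction is essentially immediate once the clique bound of Theorem \ref{231} is in hand, since it delivers a $K_5$ outright.
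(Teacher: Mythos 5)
Your proposal is correct and follows essentially the same route as the paper: the forward direction extracts a $K_5$ from the clique bound of Theorem \ref{231} and applies Kuratowski, and the reverse direction bounds the vertex set (the paper simply notes $|Gr_{\mathbb{Z}_2}(R(+)R)|\leq 4$, while you explicitly enumerate the four graded ideals and identify the graph as $K_4$, which is a harmless refinement since any graph on at most four vertices is planar).
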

\begin{proof}
If $|G(R)|\geq 2$. By Theorem \ref{231}, it follows that $K_5$ is a subgraph of $Gr_{Z_2}(R(+)R)$. So by Kuratowski's Theorem \cite[Theorem 9.10]{bo76}, $Gr_{Z_2}(R(+)R)$ is not planar. Conversely, Assume $R$ contains at most one proper nontrivial ideal. Then $|Gr_{Z_2}(R(+)R)|\leq 4$, and so it is planar.
\end{proof}

\section{$Gr_G(R)$ when $G$ is ordered group}\label{sec6}
An ordered group is a group $G$ together with a subset $S$ such that\\
(1) $e\notin S$\\
(2) If $\sigma\in G$, then $\sigma\in S$, $\sigma=e$, or $\sigma^{-1}\in S$\\
(3) If $\sigma,\tau\in S$ the $\sigma\tau\in S$\\
(4) $\sigma S \sigma^{-1}\subseteq S$, for all $\sigma\in G$\\

For $\sigma,\tau\in G $ we write $\sigma<\tau$ if and only if $\sigma^{-1}\tau\in S$ (equivalently $\tau\sigma^{-1}\in S$). Suppose that $R$ is $G-$graded ring where $G$ is an ordered group. Then any $r\in R$ can be written uniquely as $r=r_{\sigma_1}+r_{\sigma_2}+\ldots+r_{\sigma_n}$, with $\sigma_1<\sigma_2<\cdots<\sigma_n$. For each left ideal $I$ of $R$, denote by $I^{\sim}$ the graded ideal generated by the homogeneous components of highest degrees of all elements of $I$. We have the following result from \cite[Lemma 5.3.1, Corollary 5.3.3]{Nastasescue}

\begin{lem}\label{ll} Let $R$ be a $G-$graded ring where $G$ is ordered group. Then
\begin{enumerate}[(1)]
\item $I=I^{\sim}$ if and only if $I$ is $G-$graded left ideal.
\item\label{ll2} $I^{\sim}=\{0\}$ if and only if $I=\{0\}$.
\item\label{ll3} If $I\subseteq J$ then $I^{\sim}\subseteq J^{\sim}$.
\item\label{ll4} If $supp(R,G)$ is well ordered subset of $G$ and $I\subseteq J$ are left ideals then $I=J$ if and only if $I^{\sim}=J^{\sim}$
\end{enumerate}
\end{lem}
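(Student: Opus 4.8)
The plan is to dispatch statements (1)--(3) directly from the definition of $I^{\sim}$ and to reserve the real work for (4). Throughout I would use the basic observation that $I^{\sim}$ is, by construction, a $G$-graded left ideal, being generated by the homogeneous leading terms of the elements of $I$.

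For (1), one inclusion is free: since $I^{\sim}$ is always graded, $I=I^{\sim}$ forces $I$ to be graded. For the converse I would assume $I$ graded and prove both inclusions. As a graded ideal contains every homogeneous component of each of its elements, the leading term of any $r\in I$ lies in $I$, so all generators of $I^{\sim}$ lie in $I$ and $I^{\sim}\subseteq I$. Conversely, for $r\in I$ each homogeneous component $r_{\sigma}$ is itself an element of $I$ whose leading term is $r_{\sigma}$, so $r_{\sigma}\in I^{\sim}$ and hence $r\in I^{\sim}$. For (2), the implication $I=\{0\}\Rightarrow I^{\sim}=\{0\}$ is immediate, while a nonzero $r\in I$ has a nonzero highest homogeneous component, which is a nonzero generator of $I^{\sim}$. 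For (3), every generator of $I^{\sim}$ is a leading term of an element of $I\subseteq J$, hence a generator of $J^{\sim}$, so $I^{\sim}\subseteq J^{\sim}$.

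The main obstacle is the nontrivial direction of (4): if $I\subseteq J$ are left ideals with $I^{\sim}=J^{\sim}$, then $I=J$. This is where well-ordering of $supp(R,G)$ enters. I would argue by contradiction: assuming $I\subsetneq J$, I would choose among all elements of $J\setminus I$ one, say $x$, whose leading degree $\sigma$ is \emph{minimal}; this is possible because the leading degree of any nonzero element lies in the well-ordered set $supp(R,G)$. The leading term $x_{\sigma}$ then lies in $J^{\sim}=I^{\sim}$, and since $I^{\sim}$ is graded and generated by the leading terms, projecting onto degree $\sigma$ lets me write $x_{\sigma}=\sum_{j}r_{j}\ell_{j}$, where each $\ell_{j}$ is the leading term, of some degree $\tau_{j}$, of an element $y_{j}\in I$, and each $r_{j}$ is homogeneous of degree $\sigma\tau_{j}^{-1}$.

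The decisive computation is then to examine $x'=x-\sum_{j}r_{j}y_{j}\in J$. Writing $y_{j}=\ell_{j}+(\text{lower terms})$, the product $r_{j}y_{j}$ has leading contribution $r_{j}\ell_{j}$ in degree $\sigma$, while each remaining summand $r_{j}\,(y_{j})_{\rho}$ with $\rho<\tau_{j}$ sits in degree $\sigma\tau_{j}^{-1}\rho$; the delicate point, using compatibility of the order with multiplication, is the inequality $\sigma\tau_{j}^{-1}\rho<\sigma$, which follows from $(\sigma\tau_{j}^{-1}\rho)^{-1}\sigma=\rho^{-1}\tau_{j}\in S$. Hence every homogeneous component of $\sum_{j}r_{j}y_{j}$ has degree at most $\sigma$ and its degree-$\sigma$ part equals $\sum_{j}r_{j}\ell_{j}=x_{\sigma}$. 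Consequently the degree-$\sigma$ components cancel in $x'$, forcing the leading degree of $x'$ to lie strictly below $\sigma$. Since $\sum_{j}r_{j}y_{j}\in I$ while $x\notin I$, we have $x'\in J\setminus I$; if $x'\neq 0$ this contradicts the minimality of $\sigma$, and if $x'=0$ then $x\in I$, again a contradiction. Therefore $J\setminus I=\emptyset$ and $I=J$, which finishes (4).
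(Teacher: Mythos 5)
Your proof is correct. Note, however, that the paper does not prove this lemma at all: it is quoted verbatim from N\u{a}st\u{a}sescu and Van Oystaeyen (\emph{Methods of graded rings}, Lemma 5.3.1 and Corollary 5.3.3), so there is no in-paper argument to compare against. What you have supplied is a complete, self-contained proof, and it is the standard one: parts (1)--(3) are immediate from the definition of $I^{\sim}$ as the graded left ideal generated by leading components, and part (4) is the classical leading-term descent. Your handling of the delicate points in (4) is sound: the minimal choice of the leading degree $\sigma$ over $J\setminus I$ uses exactly the well-ordering hypothesis on $supp(R,G)$; the homogeneous decomposition $x_{\sigma}=\sum_j r_j\ell_j$ with $\deg r_j=\sigma\tau_j^{-1}$ is legitimate because $I^{\sim}$ is a graded left ideal generated by homogeneous elements; and the inequality $\sigma\tau_j^{-1}\rho<\sigma$ for $\rho<\tau_j$ is correctly reduced to $\rho^{-1}\tau_j\in S$ using the compatibility of the order with multiplication. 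The conclusion that $x'=x-\sum_j r_j y_j$ lies in $J\setminus I$ (or is zero) with strictly smaller leading degree closes the contradiction. In short, your argument fills a gap the paper leaves to an external reference, and does so correctly.
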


\begin{thm}\label{t543} Let $R$ be a $G-$graded ring where $G$ is an ordered group. If $supp(R,G)$ is well ordered subset of $G$ then $Gr_G(R)$ is connected if and only if $G(R)$ is connected.
\end{thm}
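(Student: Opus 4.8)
The plan is to route everything through the two available descriptions of disconnectedness: by Theorem~\ref{t1}, $Gr_G(R)$ is disconnected precisely when it is a null graph on at least two vertices, and by Theorem~\ref{t00} the same dichotomy holds for $G(R)$. So I would prove the equivalent statement that $Gr_G(R)$ is null on $\ge 2$ vertices if and only if $G(R)$ is. One implication is cheap and needs no ordering: $Gr_G(R)$ is an induced subgraph of $G(R)$, so if $Gr_G(R)$ is connected (with at least two vertices) it contains an edge, i.e. two graded ideals with nonzero intersection; these are two ideals of $R$ with nonzero intersection, so $G(R)$ is not null and hence connected by Theorem~\ref{t00}. This is exactly the content of Remark~\ref{c10}. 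All the work — and the only place the well-ordering of $supp(R,G)$ is used — lies in the converse, which I would prove in contrapositive form: if $Gr_G(R)$ is disconnected then $G(R)$ is disconnected.

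So assume $Gr_G(R)$ is disconnected. By Theorem~\ref{t1} and Corollary~\ref{c1}, $Gr_G(R)\cong N_n$ with $n\ge 2$, every proper nontrivial graded ideal is graded minimal and graded maximal, and any two distinct graded ideals meet trivially. Since graded ideals are in particular ideals, $G(R)$ already has at least two vertices, so it remains only to show $G(R)$ is null. I take two distinct proper nontrivial left ideals $A,B$ and assume toward a contradiction that $A\cap B\ne\{0\}$. The leading-term operator $\sim$ from Lemma~\ref{ll} is the bridge to the graded setting: by Lemma~\ref{ll}(2) the graded ideal $(A\cap B)^{\sim}$ is nonzero, and by Lemma~\ref{ll}(3) it lies inside both $A^{\sim}$ and $B^{\sim}$. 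Each of $A^{\sim},B^{\sim}$ is nonzero (Lemma~\ref{ll}(2)) and proper, the latter because, by Lemma~\ref{ll}(4) applied to $A\subseteq R$ (recall $R^{\sim}=R$), the equality $A^{\sim}=R$ would force $A=R$; hence $A^{\sim},B^{\sim}$ are vertices of $Gr_G(R)$ and therefore graded minimal.

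Now the squeeze closes the argument. The nonzero graded ideal $(A\cap B)^{\sim}$ is contained in the graded-minimal ideal $A^{\sim}$, so $(A\cap B)^{\sim}=A^{\sim}$, and likewise $(A\cap B)^{\sim}=B^{\sim}$. Here I invoke the sharp form of Lemma~\ref{ll}(4): since $A\cap B\subseteq A$ are left ideals with equal $\sim$, well-ordering gives $A\cap B=A$, and since $A\cap B\subseteq B$ with equal $\sim$ it gives $A\cap B=B$; hence $A=B$, contradicting $A\ne B$. Therefore any two distinct proper nontrivial left ideals of $R$ intersect trivially, so $G(R)$ is a null graph on $\ge 2$ vertices and is disconnected by Theorem~\ref{t00}.

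I expect the crux to be conceptual and localized entirely in Lemma~\ref{ll}(4). Parts (2) and (3) are formal and only yield containments among leading-term ideals; what actually forces the contradiction is the ``injectivity on chains'' of $\sim$, namely that for $I\subseteq J$ the equality $I^{\sim}=J^{\sim}$ upgrades to $I=J$. This is precisely the statement that uses well-orderedness and that fails in general — the example in Remark~\ref{c10}, graded by all of $\mathbb{Z}$, is exactly a case where the converse breaks down. A secondary subtlety, relevant only for the easy implication, is the degenerate situation in which $R$ might carry just one graded ideal: since distinct left ideals can share the same leading-term ideal, one must check that when $G(R)$ is null on $\ge 2$ vertices the ring genuinely has two distinct graded ideals, so that $Gr_G(R)$ is a null graph on $\ge 2$ vertices rather than a single point. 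Producing that second graded ideal from the null structure of $G(R)$ is the one step that requires slightly more than the squeeze, and is again where one would lean on the ordered-group leading-term calculus.
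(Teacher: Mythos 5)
Your proof is correct and rests on exactly the same mechanism as the paper's, namely Lemma \ref{ll}: the operator $I\mapsto I^{\sim}$ sends nonzero left ideals to nonzero graded left ideals and, when $supp(R,G)$ is well ordered, reflects strict inclusions. The difference is only organizational. For the hard direction the paper argues directly: given an edge between $I$ and $J$ in $G(R)$, set $K=I\cap J$; then (say) $K\subsetneq I$, so parts (2)--(4) of Lemma \ref{ll} give $\{0\}\neq K^{\sim}\subsetneq I^{\sim}$, which is already an edge of $Gr_G(R)$, and Theorem \ref{t1} finishes. You instead prove the contrapositive and detour through Corollary \ref{c1}, using graded minimality of $A^{\sim}$ to force $(A\cap B)^{\sim}=A^{\sim}=B^{\sim}$ before applying Lemma \ref{ll}(4). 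This is valid, but the minimality step is superfluous: the same lemma citations already yield the strict chain $\{0\}\neq (A\cap B)^{\sim}\subsetneq A^{\sim}$, hence an edge, which is the contradiction you want. The one loose end you flag --- the degenerate case where $Gr_G(R)$ is a single vertex (hence connected) while $G(R)$ could in principle be null on two or more vertices --- is genuinely not closed by the ``contains an edge'' argument, but the paper's own first sentence has the identical gap, so you are not missing anything the authors actually supply; if you want a fully airtight statement you should either dispose of that case or record the standing assumption that both graphs have at least two vertices.
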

\begin{proof}
If $Gr_G(R)$ is connected then $G(R)$ is not null graph and therefore it is also connected. For the converse, assume that $G(R)$ is connected and let $I$ and $J$ be adjacent vertices of $G(R)$. Hence $I\cap J\neq\{0\}$. Let $K=I\cap J$. Since $I\neq J$ then either $K\subsetneq I$ or $K\subsetneq J$. Without loss of generality assume $K\subsetneq I$. Then by parts \ref{ll2}, \ref{ll3}, and \ref{ll4} of Lemma \ref{ll}, we have $\{0\}\neq K^{\sim}\subsetneq I^{\sim}$. So $Gr(R)$ is not null and hence it is connected.
\end{proof}

\begin{thm}\label{t544} Let $R$ be a $G-$graded where $G$ is an ordered group. If $supp(R,G)$ is well ordered subset of $G$ and $R$ is local ring then $g(Gr_G(R))=g(G(R))$.
\end{thm}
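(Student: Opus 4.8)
The plan is to reduce the equality of girths to a single implication, exploiting that $Gr_G(R)$ is an induced subgraph of $G(R)$ together with the dichotomy of Theorem \ref{t3}. First I would record that $g(Gr_G(R))\in\{3,\infty\}$ by Theorem \ref{t3}, and that the identical argument---which uses only that $I\cap J$ is an ideal contained in both $I$ and $J$---gives $g(G(R))\in\{3,\infty\}$ as well. Next, the vertex set $hI^{\ast}(R)$ of $Gr_G(R)$ is contained in the vertex set $I^{\ast}(R)$ of $G(R)$ and both graphs use the same adjacency $I\cap J\neq\{0\}$, so $Gr_G(R)$ is an induced subgraph of $G(R)$; hence every cycle of $Gr_G(R)$ is a cycle of $G(R)$ and $g(G(R))\leq g(Gr_G(R))$. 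If $g(G(R))=\infty$ this inequality forces $g(Gr_G(R))=\infty$, so by the dichotomy the whole theorem comes down to proving: \emph{if $g(G(R))=3$ then $g(Gr_G(R))=3$.}

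So I would assume $G(R)$ contains a triangle, i.e.\ three distinct proper nonzero left ideals $I_1,I_2,I_3$ with $I_i\cap I_j\neq\{0\}$ for $i\neq j$, and aim to manufacture a triangle in $Gr_G(R)$. The crucial idea is to first replace this (possibly incomparable) triangle by a genuine \emph{chain} of length three, and this is exactly where the local hypothesis is used. Writing $\mathfrak{m}$ for the unique maximal left ideal, each $I_i\subseteq\mathfrak{m}$; since the $I_i$ are distinct at most one equals $\mathfrak{m}$, so after relabelling $I_1,I_2\subsetneq\mathfrak{m}$. Putting $A=I_1\cap I_2$ we have $\{0\}\neq A\subseteq I_1\subsetneq\mathfrak{m}$, and I split into the case $A\subsetneq I_1$, which gives the chain $A\subsetneq I_1\subsetneq\mathfrak{m}$, and the case $A=I_1$ (so $I_1\subseteq I_2$), which gives $I_1\subsetneq I_2\subsetneq\mathfrak{m}$. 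Either way I obtain proper nonzero left ideals $A'\subsetneq B'\subsetneq C'$; note $\mathfrak{m}$ is itself proper and nonzero, so all three are legitimate.

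I would then transport this chain into $Gr_G(R)$ via the highest-degree operation $(\,\cdot\,)^{\sim}$ of Lemma \ref{ll}, using that $supp(R,G)$ is well ordered. Part \ref{ll3} gives $(A')^{\sim}\subseteq(B')^{\sim}\subseteq(C')^{\sim}$; applying part \ref{ll4} to each nested pair upgrades these to strict inclusions $(A')^{\sim}\subsetneq(B')^{\sim}\subsetneq(C')^{\sim}$; part \ref{ll2} makes them nonzero; and since $C'\subsetneq R$ with $R^{\sim}=R$, part \ref{ll4} yields $(C')^{\sim}\neq R$, so all three are proper. These are therefore three distinct vertices of $Gr_G(R)$ lying in a chain, so they pairwise intersect nontrivially and $(A')^{\sim}-(B')^{\sim}-(C')^{\sim}$ is a triangle. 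Hence $g(Gr_G(R))=3$, completing the reduction and the proof.

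The step I expect to be the main obstacle is guaranteeing that the three graded ideals produced are \emph{distinct}. Applying $(\,\cdot\,)^{\sim}$ directly to the original triangle $I_1,I_2,I_3$ may collapse two of the images, since well-ordering (part \ref{ll4}) only controls comparable pairs and gives nothing for an incomparable triangle; this would leave me with a mere edge. Passing first to a chain is what makes $(\,\cdot\,)^{\sim}$ strictly monotone, hence injective, on the relevant ideals, and the local hypothesis is precisely what lets me extract such a chain from any triangle. The routine points to verify are the two boundary cases in the chain extraction and the fact that $\mathfrak{m}$ is a proper nonzero ideal.
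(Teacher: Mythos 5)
Your proposal is correct and follows essentially the same route as the paper: both reduce the theorem to the implication $g(G(R))=3\Rightarrow g(Gr_G(R))=3$, use locality to extract from a triangle of $G(R)$ a strictly nested chain of three nonzero proper left ideals, and then transport that chain into $Gr_G(R)$ via the operation $(\,\cdot\,)^{\sim}$ and Lemma \ref{ll}(\ref{ll4}). The only cosmetic difference is that the paper splits into Noetherian and non-Noetherian cases and invokes ``$G(R)$ is not a star'' to find the chain, whereas you extract it directly from the triangle; your explicit check that the three resulting graded ideals are distinct, nonzero, and proper is exactly the point the paper's appeal to Lemma \ref{ll}(\ref{ll4}) is meant to cover.
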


\begin{proof}
Clearly If $g(G(R))=\infty$ then $g(Gr_G(R))=\infty$. Assume that $g(G(R))<\infty$, it follows from Theorem \ref{t3} that $g(G(R))=3$. If $R$ is not left Noetherian, then we can find three nontrivial left ideals $I_1$, $I_2$, and $I_3$ such that $I_1\subsetneq I_2\subsetneq I_3$. Then, by part \ref{ll4} of Lemma \ref{ll}, we get that $I_1^{\sim}\subsetneq I_2^{\sim}\subsetneq I_3^{\sim}$. Hence $I_1^{\sim}-I_2^{\sim}-I_3^{\sim}$ is a $3-$cycle in $Gr_G(R)$. Now assume that $R$ is left Noetherian. This implies that $J\subseteq M$ for all $J\in I^{\ast}(R)$. Since $G(R)$ is not a star graph, there are two distinct left ideals $I,J\in I^{\ast}(R)\setminus\{M\}$ such that $I\cap J\neq\{0\}$. Without loss of generality, we may assume that $I\cap J\subsetneq I$. So we have $\{0\}\neq I\cap J\subsetneq I\subsetneq M$. Again by part \ref{ll4} of Lemma \ref{ll}, we obtain the $3-$cycle $(I\cap J)^{\sim}-I^{\sim}-M^{\sim}$ in $Gr_G(R)$. Therefore, $g(Gr_G(R))=3$. This completes the proof.
\end{proof}

\begin{rem}\label{r545} Take $R$ and $G$ as described in Theorem \ref{t544}, except for the condition ``R is local''. If $g(Gr_G(R))=\infty$, then we know from theorem \ref{t4} that  $R$ is a $G-$graded local ring. In this case, if and $g(G(R))=3$, then the followings hold:\\
(1) The unique $G-$graded maximal left ideal (Say $M$) is maximal among all proper left ideals.\\
(2) $K^{\sim}=M$ for every maximal ideal $K$ of $R$. \\
(3) The length of every acceding chain of left ideals is exactly four.\\
\end{rem}



\begin{thebibliography}{99}

\bibitem{abd16} R. Abu-Dawwas, Graded semiprime and graded weakly semiprime ideals, \textit{Italian Journal of Pure and Applied Mathematics}, \textbf{36}, (2016), 535-542.

\bibitem{abd10} R. Abu-Dawwas, More on Crossed Product over the Support of Graded Rings, \textit{International Mathematical Forum}, \textbf{5} (63), (2010), 3121 - 3126.



\bibitem{ab14} E. Abu Osba, S. Al-addasi, and O. Abughneim. Some Properties of the Intersection Graph for Finite Commutative Principal Ideal Rings. International Journal of Combinatorics. 2014. 6 pages.

\bibitem{ab16} E. Abu Osba, The Intersection Graph of Finite Commutative Principle Ideal Rings, Acta Mathematica Academiae Paedagogicae Nyiregyhaziensis 32(1), (2016), 15 - 22

\bibitem{ak13} S. Akbari, R. Nikadish and M. J. Nikmehr, Some results on the intersection graphs
of ideals of rings, \textsl{Journal of Algebra and its applications.} \textbf{12}(4) (2013), 1250200.

\bibitem{ak14} Akbari S. and Nikandish R., Some results on the intersection graphs of ideals of matrix algebras. Linear and Multilinear Algebra 62 (2014), 195–206.


\bibitem{an08} D. F. Anderson and A. Badawi, The total graph of a commutative ring, Journal of Algebra 320(7) (2008) 2706-2719.

\bibitem{an09} D. D. Anderson and M. Winders, Idealization of Module. Journal of Commutative Algebra , \textbf{1} (1), (2009), 3-56

\bibitem{as10} N. Ashrafi, H. R. Maimani, M. R. Pournaki and S. Yassemi, Unit graphs associated with rings, Comm. Algebra 38 (2010) 2851–2871.

\bibitem{ba17} M. Bataineh and R. Abu-Dawwas, Graded almost 2-absorbing structures, \textit{JP Journal of Algebra, Number Theory and Application}s, \textbf{39} (1), (2017), 63-75.

\bibitem{be88} I. Beck, Coloring of commutative rings, J. Algebra 116(1) (1988) 208–226.

\bibitem{be11} M. Behboodi and Z. Rakeei, The annihilating ideal graph of commutative rings I, J. Algebra Appl. 10(4) (2011) 727–739.

\bibitem{bo76} J. Bondy and U. Murty, \textit{Graph Theory with Applications}, American Elsevier Publishing Co, Inc., New York (1976).

\bibitem{ch09} I. Chakrabarty, S. Ghosh, T. K. Mukherjee and M. K. Sen, Intersection graphs of ideals of rings, \textit{Discrete Math.} \textbf{309} (17), (2009), 5381–5392.

\bibitem{co83} M. Cohen and L. Rowen, Group graded rings, Comm. in Algebra, 11, (1983), 1253-1270.

\bibitem{fa11} F. Farzalipour, P. Ghiasvand, On the union of graded prime submodules, Thai Journal of Mathematics, 9 (1) (2011), 49-55.


\bibitem{gr90} Grimaldi, R. P. (1990). Graphs from rings. Proceedings of the 20th Southeastern Conference on Combinatorics, Graph Theory, and Computing (Boca Raton, FL, 1989). Congr. Numer. Vol. 71, pp. 95–103

\bibitem{ja10} S. H. Jafari and N. J. Rad, Planarity of intersection graphs of ideals of rings, International Electronic Journal of Algebra, 8 (2010) 161--166.

\bibitem{ja11} S. H. Jafari and N. J. Rad, Domination in the intersection graphs of rings and modules, Italian journal of pure and applied mathematics, 28 (2011) 17--20.

\bibitem{kh13} K. Khaksari, F. R. Jahromi, Multiplication graded modules, International Journal of Algebra,
7 (1) (2013), 17-24.

\bibitem{kh16} F. Khosh-Ahang and S. Nazari-Moghadam, “An associated graph to a graded ring”, Publ. Math. Debrecen, \textbf{88} (3-4), (2016), 401–416.

\bibitem{le13} S. C. Lee, R. Varmazyar, Semiprime submodules of graded multiplication modules, Journal
of Korean Mathematical Society, 49 (2) (2012), 435-447.

\bibitem{ns82} C. Nastasescu and F. Van Oystaeyen, On strongly Graded Rings and Crossed Products, Communications in Algebra, 10 (19) (1982), 2085--2106.

\bibitem{Nastasescue}  C. Nastasescu and F. Van Oystaeyen, \textit{Methods of graded rings, Lecture Notes in Mathematics, 1836}, Springer-Verlag, Berlin, 2004.

\bibitem{pu14} Z. S. Pucanovi\'c, On the genus of the intersection graph of ideals of a commutative ring, Journal of Algebra and Its Applications, 13 (5) (2014) 20 Pages.

\bibitem{ra14} N. J. Rad, S. H. Jafari, S. Ghosh, On the intersection graphs of ideals of direct product of rings, Discussiones Mathematicae
General Algebra and Applications, 24 (2014) 191--201.

\bibitem{ra20} K.K. Rajkhowa and H. K. Saikia, Prime intersection graph of ideals of a ring. Proc Math Sci 130, 17 (2020). https://doi.org/10.1007/s12044-019-0541-5

\bibitem{re04} M. Refai and K. Al-Zoubi, On graded primary ideals, Turkish J. Mathematics, 28, (2004),
217-229.

\bibitem{re94} M. Refai and M. Obeidat. On a strongly–supported graded rings, Mathematica Japonica Journal, \textbf{39}. (1994), 519–522.

\bibitem{ro17} H. Roshan-Shekalgourabi and D. Hassanzadeh-lelekaami. On a graph of homogenous submodules of graded modules. Mathematical Reports, \textbf{19} (69), 1 (2017), 55–68

\bibitem{sa17} S. Sajana, D. Bharathi, K.K. Srimitra, Signed Intersection Graph of Ideals of a Ring, Inter national Journal of Pure and Applied Mathematics, 113 (10), 2017, 175-183.

\bibitem{va81} J. Van Geel and F. Van Oystaeyen, About graded fields, Indag. Math. 43 (1981), 273-286.


\bibitem{xu20} F. Xu, D. Wong, and F. Tian, Automorphism group of the intersection graph of ideals over
a matrix ring, Linear and Multilinear Algebra, (2020), 10 pages 10.1080/03081087.2020.1723473

\end{thebibliography}
\end{document}